\numberwithin{equation}{section}
\newtheorem{theorem}{Theorem}[section]
\newtheorem{lemma}[theorem]{Lemma}
\newtheorem{proposition}[theorem]{Proposition}
\newtheorem{rem}[theorem]{Remark}
\newtheorem{definition}[theorem]{Definition}
\newcommand{\ind}{\mathbf{1}}
\renewcommand{\tilde}{\widetilde}
\renewcommand{\hat}{\widehat}
\newcommand{\cE}{{\ensuremath{\mathcal E}} }
\newcommand{\cN}{{\ensuremath{\mathcal N}} }
\newcommand{\cM}{{\ensuremath{\mathcal M}} }
\newcommand{\bP}{{\ensuremath{\mathbf P}} }
\newcommand{\bE}{{\ensuremath{\mathbf E}} }
\DeclareMathSymbol{\leqslant}{\mathalpha}{AMSa}{"36} 
\DeclareMathSymbol{\geqslant}{\mathalpha}{AMSa}{"3E} 
\DeclareMathSymbol{\eset}{\mathalpha}{AMSb}{"3F}     
\renewcommand{\leq}{\;\leqslant\;}                   
\renewcommand{\geq}{\;\geqslant\;}                   
\newcommand{\dd}{\,\text{\rm d}}             
\DeclareMathOperator*{\union}{\bigcup}       
\newcommand{\sumtwo}[2]{\sum_{\substack{#1 \\ #2}}} 
\newcommand{\prodtwo}[2]{\prod_{\substack{#1 \\ #2}}}     
\newcommand{\bbE}{{\ensuremath{\mathbb E}} }
\newcommand{\bbN}{{\ensuremath{\mathbb N}} }
\newcommand{\bbP}{{\ensuremath{\mathbb P}} }
\newcommand{\bbR}{{\ensuremath{\mathbb R}} }
\newcommand{\bbZ}{{\ensuremath{\mathbb Z}} }
\newcommand{\ga}{\alpha}
\newcommand{\gb}{\beta}
\newcommand{\gd}{\delta}
\newcommand{\gep}{\varepsilon}       
\newcommand{\gp}{\varphi}
\newcommand{\gD}{\Delta}
\newcommand{\go}{\omega}
\newcommand{\gO}{\Omega}
\newcommand{\gl}{\lambda}
\newcommand{\gS}{\Sigma}
\def\captionfont@{\footnotesize}
\def\captionheadfont@{\scshape}
\long\def\@makecaption#1#2{%
  \vspace{2mm}
  \setbox\@tempboxa\vbox{\color@setgroup
    \advance\hsize-6pc\noindent
    \captionfont@\captionheadfont@#1\@xp\@ifnotempty\@xp
        {\@cdr#2\@nil}{.\captionfont@\upshape\enspace#2}%
    \unskip\kern-6pc\par
    \global\setbox\@ne\lastbox\color@endgroup}%
  \ifhbox\@ne 
    \setbox\@ne\hbox{\unhbox\@ne\unskip\unskip\unpenalty\unkern}%
  \fi
  \ifdim\wd\@tempboxa=\z@ 
    \setbox\@ne\hbox to\columnwidth{\hss\kern-6pc\box\@ne\hss}%
  \else 
    \setbox\@ne\vbox{\unvbox\@tempboxa\parskip\z@skip
        \noindent\unhbox\@ne\advance\hsize-6pc\par}%
\fi
  \ifnum\@tempcnta<64 
    \addvspace\abovecaptionskip
    \moveright 3pc\box\@ne
  \else 
    \moveright 3pc\box\@ne
    \nobreak
    \vskip\belowcaptionskip
  \fi
\relax
}
\def\writefig#1 #2 #3 {\rlap{\kern #1 truecm
\raise #2 truecm \hbox{#3}}}
\newcommand{\tf}{\textsc{f}}
\newcommand{\tg}{\textsc{g}}
\newcommand{\lint}{\llbracket}
\newcommand{\rint}{\rrbracket}
\newcommand{\cc}{\complement}
\title[The critical behavior of the  {\small $\ga=0$}  copolymer model]{
Disorder and critical phenomena: \\
the  {\large $\alpha=0$} copolymer model 
}
\author{Quentin Berger}
\address{
  Universit\'e Pierre et Marie Curie, Laboratoire de Probabilit{\'e}s et Mod\`eles Al\'eatoires, UMR 7599,
            F- 75205 Paris,France
}
\email{quentin.berger@upmc.fr}
\author{Giambattista Giacomin}
\address{
  Universit\'e Paris Diderot, Sorbonne Paris Cit\'e,  Laboratoire de Probabilit{\'e}s et Mod\`eles Al\'eatoires, UMR 7599,
            F- 75205 Paris, France
}
\email{giambattista.giacomin@univ-paris-diderot.fr}
\author{Hubert Lacoin}
\address{  IMPA-Instituto de Matem\'atica Pura e Aplicada, Estrada Dona Castorina 110
Rio de Janeiro, CEP-22460-320, Brasil. 
            }
\email{lacoin@impa.br}
\begin{document}

\begin{abstract}
The generalized copolymer model is a disordered system built on a discrete renewal process with inter-arrival distribution that decays in a regularly varying fashion with exponent $1+ \alpha\geq 1$. It exhibits a localization transition which can be characterized in terms of the free energy of the model: the free energy is zero in the delocalized phase and it is positive in the localized phase. This transition, which is observed when tuning the mean $h$ of the disorder variable, has been tackled in the physics literature notably via a renormalization group procedure  that goes under the name of  \emph{strong disorder renormalization}. We focus on the case $\alpha=0$ --  the critical value $h_c(\beta)$ of the parameter $h$ 
is exactly known (for every strength $\beta$ of the disorder) in this case -- and we provide precise estimates on the critical behavior. Our results confirm the strong disorder renormalization group prediction that the transition is of infinite order, namely that when $h\searrow h_c(\beta)$
the free energy vanishes 
faster than any power of $h-h_c(\beta)$. But we   show 
 that the free energy vanishes 
much faster than the physicists' prediction.

\medskip
\noindent  \emph{AMS  subject classification (2010 MSC)}:
60K37,  
82B27, 
82B44, 
60K35, 
82D60. 

\smallskip
\noindent
\emph{Keywords}: copolymer model, phase transition, critical phenomena, influence of disorder, strong disorder renormalization group.
\end{abstract}

\maketitle

\section{Introduction}
\label{sec:intro}

The effect of disorder on phase transitions and critical phenomena is a key issue to which a considerable attention has been paid in the physical literature and, more recently, also in the mathematical one. We refer to \cite{cf:Bovier,cf:G} for a (necessarily partial)   overview of this vast  subject
from mathematicians' perspective.
One of the basic questions -- \emph{do phase transitions withstand the introduction of impurities, i.e.\ disorder, and, if it is so, is the critical behavior the same as the one of the  pure model or not?} -- has been repeatedly at the center of the attention. On the other hand there are also cases 
in which the disorder induces a phase transition that is absent in the pure model (this is for example the case 
for the Directed Polymers in Random Environment in dimension three or larger, see e.g.~\cite{cf:C}). 

In spite of a number of remarkable results -- let us cite for instance the works \cite{cf:AW,cf:BK} on  
 the  Ising model  with  random external field -- 
the issue of understanding 
 the critical behavior in presence of disorder  is very delicate and certainly little understood  
 (at times even the issue of whether there is any transition at all is out of reach or far from obvious: \cite{cf:AW,cf:BK} are good examples of this).
The 
 renormalization group (RG) approach proposed  by 
A.~B.~Harris \cite{cf:Harris}
 turned out to be quite successful  from a physics standpoint  for a considerable number of disordered systems. 
 It is helpful for us to take the Harris' viewpoint at least for exposition purposes. Harris' approach 
 demands a model in which: (i) the disorder can be made small and switched off by tuning a parameter; (ii) the non-disordered (or \emph{pure}) model displays a phase transition.  Then  we can consider tackling the issue in a perturbative way and 
ask what is the effect of a small amount of disorder. Following \cite{cf:Harris}, it is   customary to say that the disorder is \emph{irrelevant} if the 
action of the RG makes the disorder weaker and we say that it is \emph{relevant} if the disorder is enhanced by the transformation. Making a long story short, in the first case the   
phase transition persists and disorder essentially does not alter the  critical behavior  (for example: unchanged critical  exponent(s)), while in the second case 
it is reasonable to expect a different critical behavior and possibly even that the transition is washed out.  Of course this scenario  is not the most general and is far from being mathematically understood. But the notion of relevant disorder  clearly identifies   cases in which the nature of the critical behavior  is determined by the disorder. 
In this sense, possibly going beyond the scope of \cite{cf:Harris}, it is natural to consider that disorder is relevant also in cases 
like the one in \cite{cf:AW} where the transition is smoothed out, possibly to the level of washing it out completely (even if this issue is open).

Our work is about a relevant disorder case and aims at determining in a precise way  the nature of the critical phenomenon.
In this direction, results  have been obtained recently in the context of localization transitions for random polymers and interfaces, notably \emph{copolymer near selective interface} models (copolymer for short) and \emph{pinning} models.
These models depend on two parameters: $\gb\ge 0$ that controls the strength of the disorder (if $\gb=0$, the model is pure) and $h\in \bbR$ which plays in favor, 
respectively against, localization if it is larger, respectively smaller than zero. In the limit of infinitely large systems, there is a critical value $h_c(\gb)$ 
such that the model is in a localized state if $h>h_c(\gb)$ and it is in a delocalized state if $h < h_c(\gb)$: in these models the transition can be characterized in a very simple way in terms of the free energy, namely the free energy is non negative and
localization is equivalent to the positivity of the free energy. 
 This class of models is particularly interesting because of the numerous applications, but also because  several (not always compatible) predictions have been set forth by physicists. But it seems now that a certain agreement has grown about the fact that these models fall into the realm 
 of the
\emph{(real space) strong disorder  RG } approach, also known as Ma-Dasgupta RG, from S. K. Ma and C. Dasgupta
who first proposed to abandon the  \emph{regular} procedures (block averaging, sub-lattice decimation,...), often performed by taking advantage of 
transforms and working for example in Fourier space, and focus on the irregular nature of the disorder. This is implemented by performing a coarse graining starting  with sites on which the disorder is larger.
These ideas were substantially developed by D. Fisher \cite{cf:dfisher} who realized that  this procedure can yield exact results, and who set forth the idea that, iterating the strong disorder  RG, one may end up on an \emph{infinite disorder}  fixed point or 
on a \emph{finite disorder} fixed point: exact results are expected in the first case. We retain of this approach 
that  it predicts for copolymer and pinning models that the transition becomes of \emph{infinite order} in the sense that the free energy for $h \searrow h_c(\gb)$
vanishes faster than any power of $(h-h_c(\gb))$ \cite{cf:IM, cf:M1, cf:M2}. A result of this type has been established in \cite{cf:FF2}, for a pinning model based on the two dimensional free field,
hence a $2+1$-dimensional model: it is not clear  whether or not 
 this model can be understood via strong disorder  RG approach (this   approach has been almost always applied in cases in which  the disorder is one dimensional), but we point out that the disorder makes the transition of infinite order. For the pinning model based on higher dimensional free fields \cite{cf:GL} -- d+1 dimensions with $d \ge 3$ -- the result is different, with a milder smoothing phenomenon. The articles \cite{cf:GL,cf:FF2}
are up to now  the only cases in which the critical exponents in presence of relevant disorder have been determined .  

Here we present a third case: a special case of the copolymer model. 
The copolymer model  has been tackled via  the strong disorder  RG approach with precise claims  \cite{cf:IM, cf:M1}. Our results are in agreement with the fact   that the  transition is of infinite order, but as we will explain, from a finer perspective this agreement is lost.

\subsection{The $\ga=0$ copolymer model and the localization transition}

We work with IID disorder $\{\go_n\}_{n=1,2, \ldots}$ of law $\bbP$ and we use  the notation
\begin{equation}\label{defgl}
\gl(\gb):= \log \bbE \exp(\gb \go_1).
\end{equation}
We assume that $\overline{\gb}:= \sup \{\gb\in \bbR:\, \gl(\gb)< \infty\}\in (0 , \infty]$, and for the sake of normalization
\begin{equation}
\label{normalize}
\bbE[\go_1^2]\,=\,1
 \ \ \text{ and } \ \    \bbE[\go_1]\,=\, 0 \,.
\end{equation}
We consider the discrete probability density $K(n)=L(n)/n$, $n\in \bbN:=\{1,2, \ldots\}$, with $L(\cdot)$ slowly varying at $+\infty$  and such that $\sum_{n=1}^\infty K(n)=1$. $L: (0, \infty) \to (0, \infty)$ is slowly varying \cite{cf:RegVar} if it is measurable and if $\lim_{x \to \infty} L(cx)/L(x)=1$ for every $c>0$:  
examples are presented in Definition~\ref{def:L} and, without loss of generality for us,  we can assume $L(\cdot)$ to be  smooth
\cite[Th.~1.3.3]{cf:RegVar}.
$\bP$ is the law of the two  random sequences $\tau = \{\tau_j\}_{j=0,1, \ldots}$ and $\iota=\{\iota_{j=1,2,  \ldots}\}$ with $\tau$ and $\iota$ independent and 
\begin{itemize}
\item  
$\tau$ is a renewal sequence  with $\tau_0=0$ and inter-arrival distribution $K(\cdot)$; 
\item 
$\iota$ is a sequence of independent Bernoulli variables of parameter $1/2$.
\end{itemize}
\smallskip 
Given $(\tau, \iota)$ we say, for $j=1, 2, \ldots$,  that $\{\tau_{j-1}+1, \ldots, \tau_{j}\}$ is the $j^{\textrm{th}}$ excursion and this excursion
is \emph{below the interface} (respectively, \emph{above the interface})  if $\iota_j=1$ (respectively, $\iota_j=0$).
Given $n=0,1, \ldots$ there exists a unique $j=j(n)$ such that $n \in \{\tau_{j-1}+1, \ldots, \tau_{j}\}$:
we then define $\gD_n=\iota_{j(n)}$, so we have also that $\gD_{\tau_j+1}=\gD_{\tau_{j+1}}= \iota_{j+1}$. Note that, once $\gD=\{\gD_n\}_{n=1,2, \ldots}$ is introduced,  the process $(\tau, \iota)$ is equivalent 
to the process $(\tau, \gD)$, and we will mostly prefer to use this second representation.

The partition function and central expression for our analysis is for $N\in \bbN$
\begin{equation}
\label{eq:Zngo}
Z_{N, \go}\,:=\, \bE \bigg[
\exp \bigg( \sum_{n=1}^N \left( \gb\go_n - \gl(\gb)+h\right) \gD_n \bigg) ;\, 
N \in \tau \bigg]\, ,
\end{equation}
and of course this defines a statistical mechanics model. We set also $Z_{0, \go}:=1$.
A way of thinking of it is to consider  a directed $1+1$ dimensional polymer which touches (and possibly crosses,
but not necessarily) a flat  interface -- the horizontal axis -- that separates the two half planes that are full
of two different solvents. Each monomer carries a charge  $\gb\go_n - \gl(\gb)+h$ which can be positive or negative and, while the solvent above the interface does not interact with the monomers, the solvent below the interface 
favors the positively
charged monomers and penalizes the negatively charged ones. This can be read out of \eqref{eq:Zngo} once we stipulate
that excursions with $\gD=0$ , respectively $\gD=1$, are above, respectively  below, the interface. 


Consider now the 
free energy (density) 
\begin{equation}
\tf(\gb, h)\, :=\, \lim_{N \to \infty} \frac 1N \bbE \log Z_{N, \go}\, .
\end{equation}
The existence of the limit follows from the super-additivity of $\{ \bbE \log Z_{N, \go}\}_{N=1, 2, \ldots}$, see \cite[Chapter 4, \S 4.4]{cf:GB}. Moreover since $Z_{N, \go}\ge Z_{N, \go}( \tau_1=N, \gD_1=0)=K(N)/2$ 
one directly infers that $\tf(\gb, h)\ge 0$. On the other hand, for $\gb<\bar \gb$,
\begin{equation}
\label{eq:annealing}
\frac 1N\bbE \log Z_{N, \go} \, \le\, \frac 1N \log \bbE Z_{N, \go}\, =\, \frac 1N \log
\bE \Big[
\exp \Big( h\sum_{n=1}^N \gD_n \Big) ;\, 
N \in \tau \Big]
\stackrel{N \to \infty} \longrightarrow h \ind_{(0, \infty)}(h) \, ,
\end{equation}
where the upper bound corresponding to the limit is obvious and for the lower bound it suffices to restrict to
the events $\tau_1=N$ and $\iota_1=0$   (for $h\le 0$) or $1$  (for $h>0$). 
Therefore \eqref{eq:annealing}
implies $\tf(\gb, h)=0$ if $h\le 0$. It is easy to see that $\tf(\gb, h)>0$ for $h$ large, for instance by restricting the partition function to $\tau_1=N$ and $\iota_1=1$ (we get $\tf(\gb, h)\ge h-\lambda(\gb)$). 
This is already enough to claim that there exists a critical point $h_c(\gb)$, in the sense that
$\tf(\gb, \cdot)$ cannot be real analytic at $h_c(\gb)$, with $h_c(\gb):= \max\{h: \, \tf(\gb , h)=0\}$ (note that
the monotonicity of $\tf(\gb, \cdot)$ is obvious, as well as the convexity).
In fact $\tf(\gb, h)>0$ as soon as $h>0$ (this follows from the so called \emph{rare stretch strategy}, see \cite[Chapter~6]{cf:GB}, and it is
 also a byproduct of the proof in Section~\ref{sec:rs}) and therefore $h_c(\gb)=0$. 
As it is explained at length for example in \cite{cf:coprev}, the transition from  zero to positive free energy is, in a precise sense, a delocalization to localization transition, in particular in terms of path properties of the system. Here we just focus on the free energy and we note that the two rightmost terms in \eqref{eq:annealing} are the annealed free energy of the model for finite $N$ and for $N=\infty$. This is the pure model associated to the (quenched) disordered model 
we are analyzing, and it is therefore important to remark that the annealed model has a first order transition, i.e.\
the first derivative of the free energy is discontinuous. 

\subsection{The general copolymer and the pinning model}
For the sake of better understanding and motivating our results, it is important to consider a larger class of models.
First of all, the notation we use in this work  is not customary:  the copolymer model in the literature is typically introduced 
via the partition function 
\begin{equation}
\label{eq:Zngo-old0}
Z^{\textrm{cop}}_{N, \go}\,:=\, \bE \bigg[
\exp \bigg( \varrho \sum_{n=1}^N ( \go_n + h ) s_n \bigg) ;\, 
N \in \tau \bigg]\, ,
\end{equation}
where $s_n= 1-2 \gD_n \in \{-1, +1\}$, $\varrho\ge 0$ and $h\in \bbR$ (but $h \ge 0$ without loss of generality).
So the signs $s_n$ determine whether the excursions are above or below the interface.
The expression of the partition function in \eqref{eq:Zngo-old0} is the most natural for the interpretation of the model \cite{cf:GB},
but from the technical viewpoint it is very useful to observe that 
\begin{equation}
\label{eq:Zngo-old}
Z^{\textrm{cop}}_{N, \go} \exp \bigg( -\varrho  \sum_{n=1}^N \left( \go_n + h \right) \bigg) \,:=\, \bE \bigg[
\exp \bigg( -2 \varrho \sum_{n=1}^N \left( \go_n + h \right) \gD_n \bigg) ;\, 
N \in \tau \bigg]\, ,
\end{equation}
and the right-hand side is  a partition function that defines the same model because it differs from 
$Z^{\textrm{cop}}_{N, \go}$ only by a factor that does not depend on the process $(\tau, \iota)$.
Moreover it is evident that the right-hand side of \eqref{eq:Zngo-old}
 and $Z_{N, \go}$ are directly related by a change of variables.
The variables we use, i.e.\ $\gb $ and $h$, separate better the role of the parameter that mesures the strength of the disorder
($\varrho$ or $\gb$) and  the other parameter, $h$, with which one changes the average value of the charges.
Another reason to use \eqref{eq:Zngo-old} is for the formal similarity with 
\begin{equation}
\label{eq:Zngoprim}
Z^{\textrm{pin}}_{N, \go}\,:=\, \bE \bigg[
\exp \bigg( \sum_{n=1}^N ( \gb\go_n - \gl(\gb)+h) \gd_n \bigg) ;\, 
N \in \tau \bigg]\, ,
\end{equation}
where $\gd_n$ is the indicator function that $n\in \tau$, that is that there exists $j$ such that
$n=\tau_j$. $Z^{\textrm{pin}}_{N, \go}$ is the partition function of the pinning model  that displays a similar
localization transition, with the analogous critical point $h^{\textrm{pin}}_c(\gb)$. We refer to \cite{cf:G} for a complete introduction to the model and to the questions related to it.

For both  the copolymer and the pinning models it is natural to consider the general context of an inter-arrival distribution
$K(n)=L(n)/n^{1+ \ga}$, with $\ga\ge 0$. The pinning model exhibits a richer phenomenology than the copolymer model, in the sense 
that for the pinning model, disorder is irrelevant for $\ga < 1/2$, and it is relevant for $\ga>1/2$. 
A first illustration of this fact is that
when $\ga < 1/2$ we have $h^{\textrm{pin}}_c(\gb)=0$ (at least for $\gb$ small enough, and for all $\gb<\bar \gb$ in the case $\ga=0$, considered in \cite{cf:AZ}), while for $\ga >1/2$ we have $h^{\textrm{pin}}_c(\gb)<0$. But a more important point is that the critical behavior of the pure and disordered models coincide for $\ga <1/2$ and differ for $\ga >1/2$ (we refer to \cite{cf:BLmarg} for the state of the art in the marginal case $\ga=1/2$). We stress that neither the exact value of $h^{\textrm{pin}}_c(\gb)$ (see however \cite{cf:CTT}) nor the critical behavior is known for $\ga>1/2$, and the relevant character of the transition is established via a smoothing inequality \cite{cf:smooth} (see \cite{cf:smooth2} for a generalization) that implies that the critical behaviors of  disordered and pure systems differ. 

On the other hand, for the copolymer model, it is known that disorder is relevant for every $\ga \ge 0$: the free energy $\tf(0, h)$ of the pure model (which coincides with the annealed model)  is simply equal to
$h\ind_{(0, \infty)}(h)$ (first order phase transition), see 
\eqref{eq:annealing}, whereas the  free energy of the disordered model verifies $\tf (\gb, h)=O\left((h-h_c(\gb))^2\right)$ by the smoothing inequality \cite{cf:smooth,cf:smooth2}. Moreover, we have
$h_c(\gb)< 0$ -- bounds and even sharp bounds for $\gb \searrow 0$ are known \cite{cf:BCPSZ, cf:BoGi, cf:BGLT, cf:BdHO,cf:Bcoprev,cf:T_fractmom} but the exact value is unknown --
except (as we have already mentioned) for $\ga=0$ where $h_c(\gb)= 0$. 

We have reported only the mathematically rigorous results. About non rigorous approaches we refer to \cite[\S~6.4]{cf:G} for an
overview of  some claims made on criticality for copolymer models. Here we focus on the fact that both pinning and copolymer models are expected to fall into the universality classes of the strong disorder RG \cite{cf:dfisher,cf:IM,cf:Vojta}  that predicts that the transition becomes of infinite order if disorder is relevant. Precise claims in this direction are contained in \cite{cf:IM,cf:M1,cf:M2} where a 
critical behavior of the type $\exp(-c/(h-h_c(\gb))$, $c>0$, is predicted both for the copolymer 
\cite{cf:M1} and pinning models \cite{cf:M2}. For pinning models however, we find also the
prediction   $\exp(-c/\sqrt{h-h_c(\gb)})$ in \cite{cf:tangchate} and this latter claim reappears in \cite{cf:DR}, with arguments that  are still non rigorous but with a much richer  and  more convincing analysis developed for  a simplified version of the pinning model on hierarchical (diamond) lattices.

We present in the next sections results proving that for the $\ga=0$ copolymer the transition is of infinite order,
and in particular we show that the free energy close to criticality is 
  much smaller than
$\exp(-c/(h-h_c(\gb)))$, in the sense that $c$ should be replaced by a function that diverges as $h\searrow h_c(\gb)$.

\subsection{Main results}
We introduce 
$\tilde L(x):=\int_x^\infty (L(y)/y) \dd y$: by \cite[Prop.~1.5.9a]{cf:RegVar}  $\tilde L(\cdot)$ is slowly varying 
and  $\lim_{x \to \infty} \tilde L(x) /L(x)=\infty$.
The following definition identifies a useful framework of models:
 
\medskip

\begin{definition}
\label{def:L}
We say that the the decay at infinity of the slowly varying function function $L$:
\begin{itemize}
\item[(i)]  is \emph{sub-logarithmic} if $L$ satisfies 
\begin{equation}\label{slowdef}
L(x)=(1+o(1))\ c_L/(\log x ( \log \log x)^\upsilon) \, ,
\end{equation}
\item[(ii)]  is \emph{logarithmic} if  $L$ satisfies 
\begin{equation}\label{standarddef}
L(x)= (1+o(1))\ c_L/(\log x) ^\upsilon\, ,
\end{equation}
\item[(iii)]  is \emph{super-logarithmic} if  $L$ satisfies 
\begin{equation}\label{fastdef}
 L(x) = (1+o(1)) \ c_L \exp(-( \log x)^{1/\upsilon}) \, ,
\end{equation}
\end{itemize}
where in \eqref{slowdef}-\eqref{fastdef} $x \to \infty$ and  the parameters $\upsilon>1$ and $c_L>0$ can be chosen arbitrarily.
These 
assumptions correspond to asymptotic estimates on the decay of $\tilde L$:
\begin{equation}\label{decaytilde}
\tilde L(x)= (1+o(1)) \times
\begin{cases}
(c_L/(\upsilon-1)) (\log \log x)^{-\upsilon+1} \quad &\text{for {sub-logarithmic} decay},\\
(c_L/(\upsilon-1)) (\log x)^{-\upsilon+1}\quad &\text{for {logarithmic} decay},\\
c_L \upsilon (\log x)^{1-1/\upsilon}\exp(-( \log x)^{1/\upsilon}) \quad &\text{for {super-logarithmic} decay}.
\end{cases}
\end{equation}
\end{definition}



To state  the results 
we introduce for $\gb< \overline{\gb}$
\begin{equation}
 \label{eq:q1q2}
 q_1(\gb)\, =\, \gb \gl'(\gb)-\gl(\gb)
 \ \ \text{ and } \ \ 
 q_2(\gb)\, :=\,  \gl (2\gb)- 2 \gl(\gb)\, .
 \end{equation}
Note that both quantities are positive if $\gb>0$ and that  $q_1(\gb)< \infty$ for $\gb \in [0, \overline{\gb})$ while
$q_2(\gb)< \infty$ for $\gb \in [0, \overline{\gb}/2)$. On the other hand, 
  $q_1(\gb)= \gb^2/2+ O( \gb^3)$  and $q_2(\gb)= \gb^2+ O( \gb^3)$ for $\gb\searrow 0$. 
We start with a general result -- \textit{i.e.}\ not restricted to the context of Definition~\ref{def:L} -- that says in particular that the transition is of \emph{infinite order}.

\begin{theorem}
\label{th:general}
Consider a general slowly varying function $L$ satisfying $\sum_n L(n)/n=1$.
For every $\gb \in (0, \overline{\gb})$ and every $b \in (0,1)$  there exists $h_0>0$ such that for every $h \in (0, h_0)$
\begin{equation}
\label{eq:general}
\tf(\gb, h) \, \le \, \exp \bigg( - b \, q_1(\gb) \frac 1h \log \bigg( \frac{\tilde L(1/h)}{L(1/h)} \bigg) \bigg)\, .
\end{equation}
\end{theorem}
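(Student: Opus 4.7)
The plan is to combine the fractional moment method with a change of measure on the disorder, tuned to the slowly-varying tail of $K$. Since $x\mapsto x^\gamma$ is concave for $\gamma\in(0,1)$, Jensen's inequality gives
\[
\tf(\gb,h)\,\le\,\frac{1}{\gamma}\liminf_{N\to\infty}\frac{1}{N}\log\bbE\big[Z_{N,\go}^\gamma\big],
\]
so it suffices to establish, for a well-chosen $\gamma$, a bound $\bbE[Z_{N,\go}^\gamma]\le Ce^{\gamma rN}$ with $r$ not exceeding the right-hand side of \eqref{eq:general}. First I would expand $Z_{N,\go}$ over renewal excursions, writing each excursion of length $\ell_i$ as a factor $K(\ell_i)(1+e^{S_i})/2$ with $S_i:=\sum_{n\in\text{exc. }i}(\gb\go_n-\gl(\gb)+h)$, and use the subadditivity $(\sum a_j)^\gamma\le\sum a_j^\gamma$ to obtain
\[
\bbE[Z_{N,\go}^\gamma]\,\le\,\sum_{k\ge 1}\sum_{\ell_1+\ldots+\ell_k=N}\prod_i\tilde K(\ell_i),\qquad \tilde K(\ell):=2^{-\gamma}K(\ell)^\gamma\,\bbE\big[(1+e^{S_\ell})^\gamma\big].
\]
This convolution series is controlled by the exponential growth rate $r^*(\gamma)$ of the kernel $\tilde K$, characterised by $\sum_\ell\tilde K(\ell)e^{-r^*\ell}=1$ when $\sum_\ell\tilde K(\ell)>1$, which yields $\tf(\gb,h)\le r^*(\gamma)/\gamma$.

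The heart of the argument is then a sharp bound on the single-excursion factor $\bbE[(1+e^{S_\ell})^\gamma]$. To recover $q_1(\gb)$, I would apply a change of measure tilting the disorder on the excursion by $+\gb$, so that it realises its localised distribution with relative entropy $\ell q_1(\gb)$ per excursion (this is exactly $H(\tilde\bbP\vert\bbP)$ for $d\tilde\bbP/d\bbP=\exp(\gb\go-\gl(\gb))$), and combine this tilt with the fractional moment via a H\"older-type inequality. The target estimate is
\[
\bbE[(1+e^{S_\ell})^\gamma]\,\le\,c_{\gb,\gamma}\,\exp\!\big(-\gamma\ell q_1(\gb)(1-o_{h\to 0}(1))\big).
\]
Plugging this into $\tilde K$ and analysing the resulting generating function via Karamata--Tauberian asymptotics for the slowly varying $L$ at the scale $\ell=\ell(h)$ characterised by $h\ell\asymp\log(\tilde L(\ell)/L(\ell))$, I would then extract the claimed bound on $r^*(\gamma)/\gamma$ after optimising $\gamma$ (close to $1$ as $h\to 0$).

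The hard part will be recovering the \emph{sharp} constant $q_1(\gb)$ in the single-excursion bound. A direct fractional-moment large-deviation estimate produces only the rate $\sup_\mu(\mu\gl(\gb)-\gl(\mu\gb))$, which is strictly smaller than $q_1(\gb)$ (for instance $q_1(\gb)/4$ in the Gaussian case). Obtaining $q_1(\gb)$ itself will require a careful pairing of the Cram\'er tilt by $+\gb$ with the fractional moment, arranged so that the entropy cost of the tilt and the $1/\gamma$ loss in Jensen's step balance out to leave precisely $q_1(\gb)$ in the exponent. A secondary difficulty will be the renewal-theoretic asymptotics in the slowly-varying regime, where a careful Karamata-type analysis is needed to extract the factor $\log(\tilde L(1/h)/L(1/h))$ rather than a cruder $\log(1/h)$.
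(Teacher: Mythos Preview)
Your target single-excursion estimate
\[
\bbE\big[(1+e^{S_\ell})^\gamma\big]\,\le\,c_{\gb,\gamma}\,\exp\!\big(-\gamma\ell\, q_1(\gb)(1-o(1))\big)
\]
cannot hold: since $1+e^{S_\ell}\ge 1$ pointwise, the left-hand side is always $\ge 1$, and no tilt of $\go$ can make it decay in $\ell$. This is the central gap, and it is not a matter of sharpening constants. Plugging the honest lower bound $\tilde K(\ell)\ge 2^{-\gamma}K(\ell)^\gamma=2^{-\gamma}L(\ell)^\gamma\ell^{-\gamma}$ into your renewal equation already gives $\sum_\ell\tilde K(\ell)=\infty$ for every $\gamma<1$; moreover, since $\bbE[(1+e^{S_\ell})^\gamma]$ is of order $e^{h\ell}$ for large $\ell$ (the contribution $e^{S_\ell}$ dominates on the event $S_\ell>0$, which has annealed weight $e^{h\ell}$), a short computation shows that the resulting $r^*(\gamma)$ is never below the annealed value $h$. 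The scheme as written therefore cannot reach an exponentially small free-energy bound, regardless of how $\gamma$ is optimised.

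The paper's proof is structurally different. It does not take a fractional moment of $Z_N$ here; instead it writes
\[
\bbE\log Z_{N,\go}\,\le\,\log\bbE\big[f(\go)Z_{N,\go}\big]-\bbE\log f(\go)
\]
for a \emph{penalising density} $f(\go)=\exp\big(-4h\sum_{m-n>k}(m-n)\ind_{\cE(n,m)}(\go)\big)$, where $\cE(n,m)=\{\sum_{j=n+1}^m\go_j\ge b\gl'(\gb)(m-n)\}$. Cram\'er's bound on $\bbP(\cE(n,m))$ controls $-\bbE\log f$ and produces the rate $\gS(b\gl'(\gb))\to q_1(\gb)$ as $b\nearrow 1$; this is where $q_1(\gb)$ genuinely enters, as the Cram\'er rate of a rare event under $\bbP$, not as an entropy cost of a tilt paired with Jensen. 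For $\bbE[fZ_N]$ one integrates over $\go$ at fixed $(\tau,\iota)$: on each excursion below the interface the $\go$'s acquire the $+\gb$-tilted law, under which $\cE$ holds with probability close to $1$, so the factor $f$ flips the reward $+h$ into a penalty $-h$ on excursions longer than $k$. What remains is a renewal with kernel $K_k(\ell)=K(\ell)\big(\tfrac12+\tfrac12 e^{h\ell(\ind_{\{\ell\le k\}}-\ind_{\{\ell>k\}})}\big)$, and an elementary balance between $\sum_{\ell\le k}(e^{h\ell}-1)K(\ell)$ and $\sum_{\ell>k}(1-e^{-h\ell})K(\ell)$ shows it is a subprobability precisely when $k=h^{-1}b\log(\tilde L(1/h)/L(1/h))$; hence $\bbE[fZ_N]\le 1$ and the free-energy bound follows from the $-\bbE\log f$ term alone. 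If you want to combine this with fractional moments (as the paper does for the sharper upper bounds), the change of measure must still be a penalisation of this indicator type, applied to the full $Z_N$ via H\"older, rather than a Cram\'er tilt applied excursion by excursion.
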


Note that 
\begin{equation}
\log \left(
\frac{\tilde L(x)}{L(x)}\right) \, =\, (1+o(1)) \times \begin{cases} \log \log x    &\text{for sub-logarithmic and logarithmic decay},\\
\left(\frac{\upsilon -1}{ \upsilon}\right) \log \log x 
 & \text{for super-logarithmic decay}.
\end{cases}
\end{equation}
Therefore, in the framework of Definition~\ref{def:L},  \eqref{eq:general} can be stated as 
\begin{equation}
\label{eq:general-L}
\tf(\gb, h) \, \le \, \exp \bigg( - \bar c_L q_1(\gb) \frac 1h \log \log (1/h) \bigg)\, ,
\end{equation}
with $\bar c_L$ a positive constant that depends only on $L(\cdot)$.

\begin{rem}
The upper bound  \eqref{eq:general-L} holds in much greater, but not in full generality.
 In fact,  consider  the case in which  $\tilde L(x)= C \exp\left(-\int_1^x \eta(u) \dd u /u\right)$, with $C>0$ 
and $\eta(\cdot)$ a positive continuous function vanishing at infinity --~it is straightforward to see that the right-hand side defines a slowly varying function. 
 Since $\lim_{x \to \infty} \tilde L(x)=0$, we have also that   $\int_1^\infty \eta(u) \dd u /u =\infty$. 
 We then have $L(x)= -x \tilde L'(x)$ so we compute
 \begin{equation}
 \log \frac{\tilde L(x)}{L(x)}\, =\, - \log \eta(x)\, .   
 \end{equation} 
 Therefore, for \eqref{eq:general-L} to hold, it suffices that there exists $c>0$ such that $\log \eta (x) \ge -c \log \log x$ for 
 $x$ large (and necessarily $c\le 1$ otherwise $\int_1^\infty \eta(u) \dd u /u <\infty$). On the other hand, by  choosing $\eta (x)= 1/ \log \log (x+x_0)$,
 $x_0>e$,
 we have  an example to which we can apply \eqref{th:general}, but for which \eqref{eq:general-L} fails. For this example, like for many others 
 that we can write by straightforward generalization, 
 $L(\cdot)$  decays even faster than  super-logarithmically.
 \end{rem}

If we restrict to the context of Definition~\ref{def:L} we have much sharper results:

\begin{theorem}
\label{th:sharper}
Choose $L(\cdot)$  as in Definition~\ref{def:L} and fix 
$\gb\in (0, \overline{\gb})$. 
\begin{enumerate}
\item[(i)] In the sub-logarithmic case we can choose two positive constants $c_+$ and $c_-$ depending only on $\upsilon$ 
so that there exists $h_0$ such that for $h \in (0, h_0)$
\begin{equation}
\label{eq:sharper-slow}
\exp \bigg(- c_- \ q_2(\gb) \frac 1h \log \log (1/h) \bigg) \, \le \, \tf (\gb, h) \, \le \,  \exp \bigg(- c_+ \ q_1(\gb) \frac 1h \log \log (1/h) \bigg)\, .
\end{equation}
\item[(ii)] In the logarithmic case
we can choose two positive constants $c_+$ and $c_-$ depending only on $\upsilon$ 
so that
 there exists $h_0$ such that for $h \in (0, h_0)$
\begin{equation}
\label{eq:sharper-standard}
\exp \bigg(- c_-\ q_1(\gb)\frac 1h \log  (1/h) \bigg) \, \le \, \tf (\gb, h) \, \le \,  \exp \bigg(- c_+\ q_1(\gb) \frac 1h  \log (1/h) \bigg)\, .
\end{equation}
\item[(iii)] In the super-logarithmic case, for $h \searrow 0$ we have
\begin{equation}
\label{eq:sharper-fast}
 \tf (\gb, h) \, = \,  \exp \bigg(- (1+o(1))\left( \frac h {q_1(\gb)} \right) ^{-\upsilon/(\upsilon -1)} \bigg)\, .
\end{equation}
\end{enumerate} 
\end{theorem}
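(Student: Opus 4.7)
\noindent The three regimes of $L$ require separate treatments, and the upper and lower bounds in each case rely on different techniques. For the sub-logarithmic upper bound in (i), a direct computation gives $\log(\tilde L(x)/L(x))=(1+o(1))\log\log x$ under \eqref{slowdef} (using \eqref{decaytilde}), so the bound follows immediately from Theorem \ref{th:general}. For the upper bounds in (ii) and (iii), Theorem \ref{th:general} yields only a $\log\log(1/h)$ rate, which is far too weak. My plan is to sharpen the change-of-measure / fractional-moment strategy behind \eqref{eq:general}: coarse-grain into blocks at a scale $\ell=\ell(h)$ optimized for the decay of $L$, apply a fractional-moment bound $\bbE[Z_{N,\go}^\gga]$ with a tilt that shifts the mean of $\go$ by $\gl'(\gb)$ (at cost $\ell\, q_1(\gb)$ per block), and balance this tilt cost against the renewal contribution $\log K(\ell)\sim -\log\ell+\log L(\ell)$. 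In the logarithmic case the balance produces $\ell\sim\log(1/h)/h$ and rate $q_1(\gb)h^{-1}\log(1/h)$; in the super-logarithmic case it produces $\ell\sim (q_1(\gb)/h)^{1/(\upsilon-1)}$ and rate $(q_1(\gb)/h)^{\upsilon/(\upsilon-1)}$.

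The lower bounds in (ii) and (iii) will come from a rare-stretch construction. Fix $\ell$ and declare a block of length $\ell$ ``good'' if $\sum_{n\in\text{block}}\go_n\ge \ell\,\gl'(\gb)$, which has probability $p\approx\exp(-\ell\, q_1(\gb))$ by Cram\'er's theorem. The strategy uses $\gD=1$ on good blocks (gain $\approx \ell h$ on average, since the Cram\'er rate cancels $\gb\gl'(\gb)-\gl(\gb)=q_1(\gb)$) and a single $\gD=0$ excursion of length $\sim 1/p$ between consecutive good blocks. Restricting $Z_{N,\go}$ to this configuration and taking the expectation yields
\begin{equation*}
\tfrac{1}{N}\bbE\log Z_{N,\go} \,\geq\, p\bigl[\,\ell\, h + \log L(\ell)+\log L(1/p)-\log\ell\,\bigr]+O(p),
\end{equation*}
so positivity requires $\ell h$ to dominate $|\log L(1/p)|$, which is $\sim\upsilon\log(\ell\, q_1(\gb))$ in the logarithmic regime and $\sim(\ell\, q_1(\gb))^{1/\upsilon}$ in the super-logarithmic regime. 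Optimizing $\ell$ in each case yields exactly the claimed rates, including the matching constant $(1+o(1))$ in (iii).

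The sub-logarithmic lower bound in (i) will be the most delicate step: the naive rare-stretch argument above only gives the weaker rate $q_1(\gb)h^{-1}\log(1/h)$, whereas the claim $\exp(-c_-q_2(\gb)h^{-1}\log\log(1/h))$ is much larger. The appearance of $q_2$ rather than $q_1$ suggests summing over the Bernoulli variables $\iota$ inside each block (rather than freezing them at $1$): this gives a per-block factor $\tfrac12(1+e^Y)$ with $Y=\gb\sum_{n\in\text{block}}\go_n+\ell(h-\gl(\gb))$, and a Paley--Zygmund-style lower bound on $\bbE\log\bigl(\tfrac12(1+e^Y)\bigr)$ involves $\bbE[(1+e^Y)^2]$, whose dominant term carries the factor $\exp(\ell\, q_2(\gb)+2\ell h)$. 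Since $L$ decays only sub-logarithmically, arbitrarily long renewal excursions remain cheap (the cost $|\log L(x)|$ is only of order $\log\log x$), which permits the much shorter block length $\ell\sim \log\log(1/h)/(h\, q_2(\gb))$ without being overwhelmed by the $\log L$ corrections, and precisely produces the $\log\log(1/h)$ scale. The main obstacle will be to make this second-moment-type argument sharp enough for the constant $q_2(\gb)$ (rather than some other Cram\'er-type rate) to emerge, and to verify that the various $\log L$ correction terms indeed remain subdominant for this choice of $\ell$.
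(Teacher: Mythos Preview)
Your outline for the lower bounds in (ii) and (iii) via the rare-stretch strategy, and for the upper bounds in (ii) and (iii) via a fractional-moment / coarse-graining scheme with tilt cost $q_1(\gb)$ per block, follows the paper's approach closely (Sections~\ref{sec:rs} and~\ref{sec:iub} respectively). Your observation that the sub-logarithmic upper bound in (i) already follows from Theorem~\ref{th:general} is also correct; the paper re-derives it via the Section~\ref{sec:iub} machinery and obtains a slightly better constant ($c_+<\upsilon$ rather than $c_+<1$), but for the statement as written your shortcut is legitimate.

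The genuine gap is your plan for the sub-logarithmic lower bound in (i). The mechanism you describe---freeing the signs $\iota$ inside blocks of length $\ell$, getting a per-block factor $\tfrac12(1+e^Y)$, and then applying a ``Paley--Zygmund-style lower bound on $\bbE\log\bigl(\tfrac12(1+e^Y)\bigr)$''---does not work as stated: there is no second-moment lower bound for $\bbE\log X$, and a per-block argument of this type would still force the block length $\ell$ to absorb the entropy cost $\log K(\ell)\sim -\log \ell$, which brings back the $\log(1/h)$ scale you are trying to avoid. You have correctly located where $q_2(\gb)$ comes from algebraically (namely $\bbE[e^{2Y}]$), but not the construction that makes it the relevant rate.

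What the paper does (Section~\ref{sec:slowcase}) is structurally different. One restricts $Z_{N,\go}$ to a set $G_m$ of trajectories that alternate a \emph{long} excursion above (length in $[M,M^2]$, $M=\exp(c_2 k)$) with a \emph{short} excursion below (length in $[1,k]$, $k\sim c_1 h^{-1}\log\log(1/h)$), repeated $m$ times; the signs $\iota$ are \emph{fixed}, not summed. Paley--Zygmund is applied to the \emph{entire} trimmed partition function $\tilde Z_{N,\go}$, and $q_2(\gb)$ enters through the two-replica identity $\bbE[\tilde Z^2]/\bbE[\tilde Z]^2=\tilde\bE^{\otimes 2}\bigl[\exp(q_2(\gb)\sum_n\gD^{(1)}_n\gD^{(2)}_n)\bigr]$. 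The crucial point---which your per-block scheme cannot see---is that with long excursions of length at least $M$ separating short visits of length at most $k$, two independent replicas have their short excursions overlap with probability $O(k\log k\log M/M)$, so choosing $c_2>q_2(\gb)$ makes the second moment $(1+o(1))$ times the first moment squared. The entropy cost of hitting the extremities of the short excursions is then governed by $\log L(1/p)$ with $p\sim\tilde L(M)$, and in the sub-logarithmic regime this is only $O(\log\log(1/h))$, which is what allows $k\sim h^{-1}\log\log(1/h)$ rather than $h^{-1}\log(1/h)$.
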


\medskip

We have not made $c_\pm$ explicit in order to keep the statement lighter, but 
they are very explicit (even if probably none is optimal): in \eqref{eq:sharper-slow}
it suffices to choose 
$c_->\upsilon+1$ and  $c_+< \upsilon$, in \eqref{eq:sharper-standard}
it suffices to choose
 $c_->5/2 + \upsilon$ and  $c_+ < \upsilon -1$. Note that the lower bound in \eqref{eq:sharper-slow}
 becomes empty if $\gb> \overline{\gb}/2$, and possibly also for $\gb= \overline{\gb}/2$.
 
Let us also stress that we work here in the framework of Definition~\ref{def:L} for simplicity: some of the expressions simplify thanks to this assumption, but our results can be adapted to a much wider class of slowly varying function $L(\cdot)$. 
We can already point out the only places where Definition~\ref{def:L} is used:
to obtain \eqref{eq:felb1}--\eqref{eq:felb3} in Section~\ref{sec:rs}; in Section~\ref{sec:slowcase}; to obtain \eqref{eq:Aslow}--\eqref{eq:Afast} in Section~\ref{sec:iub}.

 \subsection{About our methods, about perspectives and overview of the work}
Some important remarks are in order:

\smallskip
\noindent
{(1)}  We stress once again the partial agreement with respect to the physical literature. 
 Theorem~\ref{th:general} establishes that the transition is of infinite order, but it
 is not in agreement with the works that predict a behavior of the type $\exp(-c/h)$, with $c$ a constant. 
Note that $c$ is rather a diverging function, and Theorem~\ref{th:sharper} gives more information: the divergence is only slowly varying for the sub-logarithmic and logarithmic cases, but it  behaves as a power law in the super-logarithmic case. 

\smallskip
\noindent
{(2)}
 We note that the result we find go in the opposite direction 
with respect to the   $\exp(-c/\sqrt{h})$ behavior of \cite{cf:DR,cf:tangchate}: there is no disagreement between  
our results and the claim in \cite{cf:DR,cf:tangchate} because  \cite{cf:DR,cf:tangchate} are about pinning models. 
Nevertheless this is very  intriguing and calls for deeper understanding. With respect to this,
we recall that the strong disorder RG analysis in \cite{cf:dfisher}  is inspired and built on the results of McCoy and Wu  \cite{cf:MW}
on the two dimensional Ising model with columnar disorder. McCoy and Wu
predict an  infinite order transition, with a precise form of the singularity that is different from the exponential type of
essential singularity  found or predicted for copolymer and pinning. 

\smallskip
\noindent
{(3)} Our results definitely exploit the fact that the critical point of the quenched system is explicitly known and, considering also \cite{cf:GL,cf:FF2},  this appears to be for the moment 
an unavoidable ingredient.   Also  in the case of \cite{cf:DR} the critical value is exactly known and this is exploited in the analysis of the model. 
It would be of course a major progress if methods could be developed to deal with cases in which the critical point is known only 
implicitly. And this appears as a necessary step if one wants  to understand the criticality of the copolymer model for $\ga>0$  
or if one wants to really apprehend 
the strong disorder universality class.

\medskip

The rest of the paper is devoted to the proofs.

\begin{itemize}
\item In Section~\ref{sec:rs}, we prove a general lower bound based on revisiting the rare stretch strategy in the direction of making it sharper. 
Essentially, we exploit local limit results   instead of rougher  integral limit 
results like in previous  works \cite{cf:BoGi} and \cite[Sec.~6.2 and Sec.~5.4]{cf:GB}. This covers the lower bounds in  \eqref{eq:sharper-standard} and \eqref{eq:sharper-fast}.
\item In Section~\ref{sec:slowcase} we provide the lower bound in \eqref{eq:sharper-slow}: this requires a new, more entropic strategy with respect to the rare stretch strategy and it is based on applying the second moment method
on a suitably trimmed partition function. Of course here the difficulty is in finding a suitable  subset of the renewal trajectories that yield 
a contribution to the partition function that is sufficient to match the upper bound and that is not too wide so that the second moment can be compared with the square of the first moment on sufficiently large volumes.  
\item In Section~\ref{sec:ub} we provide the proof of Theorem~\ref{th:general}. This is achieved via a penalization argument inspired by the analogous bound used for the two dimensional free field in \cite{cf:FF2}. The strategy is sketched at the beginning of Section~\ref{sec:ub}.
\item In Section~\ref{sec:iub} we provide the proof of the upper bounds in Theorem~\ref{th:sharper}. 
The key idea here is to pass from a global to a targeted penalization: we avoid penalizing the 
charges in regions that are not visited. In practice this requires setting up an appropriate coarse graining procedure that builds on        the basic structure 
of the argument in \cite{cf:DGLT}. 
Therefore, with respect to the case of  the two dimensional free field   \cite{cf:FF2}, we are  able to upgrade the upper bound penalization procedure to get to results that are \emph{optimal} (in a sense   that can be read out of Theorem~\ref{th:sharper}). 
\end{itemize}
\smallskip

We use the notation $ \lint n,m \rint:= [n,m]\cap \bbZ$, for $n, m \in \bbZ$ and $n\le m$.

\section{The lower bound: rare stretch strategy} 
\label{sec:rs}
 
We develop in this section   a more quantitative version of the 
 rare stretch strategy argument in \cite[Sec.~6.2 and Sec.~5.4]{cf:GB}.  This relatively simple argument yields \emph{optimal} bounds in the logarithmic and super-logarithmic cases.

Recall the definition of $q_1(\gb)$  
in \eqref{eq:q1q2}.
\begin{theorem}
\label{th:rss}
For $\gb \in (0, \overline{\gb})$ we have:
\begin{itemize}
\item [(i)] For every  $b>7/2$ in the sub-logarithmic case and for every $b>5/2+\upsilon$ in the logarithmic case there exists $h_0>0$ such that for every $h\in (0, h_0)$
\begin{equation}
\tf(\gb,h) \, \ge\, \exp \bigg( -b\,  q_1(\gb) \frac{\log(1/h)}h \bigg)\, ;
\end{equation}

\item [(ii)] In the super-logarithmic case, for every $b>1$ there exists $h_0>0$ such that for every $h\in (0, h_0)$
\begin{equation}
\tf(\gb,h) \, \ge\, \exp \bigg( -b \left( \frac{q_1(\gb)}{h} \right)^{- \frac \upsilon{\upsilon-1}} \bigg)\, .
\end{equation}
\end{itemize}
\end{theorem}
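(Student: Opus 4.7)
The plan is to execute a quantitatively sharp version of the classical rare stretch strategy of~\cite[Sec.~6.2 and~5.4]{cf:GB}, the improvement being the use of a local (Bahadur--Ranga Rao type) large deviation estimate in place of the integral Cram\'er bound that was sufficient in earlier works. First, I would fix a block length $\ell \in \bbN$ (to be optimised at the end) and partition $\lint 1, N\rint$ into consecutive blocks $B_k = \lint (k-1)\ell+1, k\ell \rint$. Call $B_k$ \emph{good} if $\sum_{n \in B_k} \go_n$ lies in a window of width $O(1)$ around $\gl'(\gb)\ell$. The local Cram\'er asymptotics applied to the exponentially tilted law of $\go_1$ (at tilt $\gb$) then yield
\[
 p_\ell \,:=\, \bbP(B_1 \text{ good}) \,\ge\, \frac{c_0(\gb)}{\sqrt{\ell}}\, \exp\bigl(-q_1(\gb)\, \ell\bigr);
\]
the polynomial prefactor $1/\sqrt{\ell}$, invisible to an integral LDP bound, is what gives the gain over the rougher estimates used previously.

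Next I would lower-bound $Z_{N,\go}$ by restricting the renewal-and-sign expectation to trajectories tailored to $\go$. Let $T_1 < \cdots < T_{M_N}$ enumerate the indices of the good blocks inside $\lint 1, N \rint$; impose that $\tau$ visits both endpoints of each $B_{T_i}$, that the excursion occupying $B_{T_i}$ carries $\iota = 1$ (so $\gD \equiv 1$ on $B_{T_i}$, giving an energetic reward), and that each inter-block interval is a single excursion with $\iota = 0$ (so $\gD \equiv 0$ there, no energetic contribution). This produces
\[
 Z_{N,\go} \,\ge\, \prod_{i=1}^{M_N} \tfrac14\, K(\ell)\, K\bigl((T_{i+1}-T_i-1)\ell\bigr)\, \exp(R_i),
\]
with $R_i := \sum_{n \in B_{T_i}}(\gb \go_n - \gl(\gb)+h)$ and boundary terms at $0$ and $N$ absorbed in multiplicative constants.

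Taking $\log \bbE$, using that $M_N \sim N p_\ell/\ell$, that $\bbE[R_i \mid B_{T_i} \text{ good}] \ge (q_1(\gb) + h)\ell - O(1)$ by construction, and that the gaps $G_i = (T_{i+1}-T_i-1)\ell$ behave (up to the factor $\ell$) like geometric variables of parameter $p_\ell$ and concentrate on log scale around $\ell/p_\ell$, one arrives at
\[
 \tf(\gb, h) \,\ge\, \frac{p_\ell}{\ell}\Bigl( h\ell + \log L(\ell/p_\ell) - \tfrac{5}{2}\log\ell - C\Bigr),
\]
where the $\tfrac52 \log\ell$ assembles $-\log K(\ell) \sim \log\ell$ (inside the good block) and the $\tfrac32 \log\ell$ prefactor correction in $-\log(\ell/p_\ell)$, after the leading $q_1(\gb)\ell$ in $-\log(\ell/p_\ell)$ cancels the reward $q_1(\gb)\ell$. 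It then remains to choose $\ell$ so the bracket is a positive multiple of $h\ell$. Since $-\log L(\ell/p_\ell)$ equals $\log(q_1(\gb)\ell) + O(\log\log\ell)$ in the sub-logarithmic case, $\upsilon \log(q_1(\gb)\ell) + O(1)$ in the logarithmic case, and $(q_1(\gb)\ell)^{1/\upsilon} + O(1)$ in the super-logarithmic case, solving $h\ell \gtrsim (\text{correction})$ for $\ell$ and substituting back gives the optimal scales $\ell \sim (\tfrac72 + \gep)\, h^{-1} \log(1/h)$, $\ell \sim (\tfrac52 + \upsilon + \gep)\, h^{-1} \log(1/h)$, and $\ell \sim (1+\gep)\, q_1(\gb)^{1/(\upsilon-1)}\, h^{-\upsilon/(\upsilon-1)}$ respectively, yielding the constants $b > 7/2$, $b > 5/2+\upsilon$, and $b > 1$ as stated.

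The one genuinely technical point is the control of $\bbE \log K(G_i)$: since $G_i$ has a geometric-type upper tail, the naive Jensen bound $\bbE \log G_i \le \log \bbE G_i$ points in the wrong direction. I would handle it by observing that $G_i / (\ell/p_\ell)$ is an order-one random variable with sub-exponential tails on both sides, so $\bbE \log G_i = \log(\ell/p_\ell) + O(1)$; slow variation of $L$ then gives $\bbE \log L(G_i) = \log L(\ell/p_\ell) + o(1)$. Secondary issues --- the $\go$-dependence of $M_N$, the truncation at the boundaries of $\lint 1, N \rint$, and the bookkeeping needed to carry the local-CLT prefactor through a random-sum computation --- are routine and can be dispatched by standard superadditivity and concentration arguments without affecting the leading exponent.
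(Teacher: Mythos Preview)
Your proposal is correct and follows essentially the same route as the paper's proof: both use the sharp Bahadur--Rao asymptotic to extract the $\ell^{-1/2}$ prefactor in $p_\ell$, restrict to the same rare-stretch trajectories, control $\bbE\log K(G_i)$ via Potter's bound together with the fact that the normalized gap $p_\ell G_i/\ell$ is an order-one variable, and arrive at the same key inequality $\tf(\gb,h)\ge \frac{p_\ell}{\ell}\bigl(h\ell+\log L(1/p_\ell)-(5/2+\gep)\log\ell\bigr)$ before optimizing $\ell$ case by case. The only cosmetic differences are that you define good blocks via a two-sided window rather than a one-sided threshold, and the paper spells out the Potter-bound step for $\bbE\log K(G_i\ell)$ a bit more explicitly (including the convention $K(0):=1$ for consecutive good blocks).
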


The argument is not optimal in the sub-logarithmic case, in particular
 because in that setup the entropic cost for having renewal points only at multiples of $\ell$ becomes non-negligible.
 \medskip

\noindent
{\it Proof.}
 Recall the assumptions on $\go$, cf. \eqref{defgl}-\eqref{normalize}. 
A  sharp version of the Cram\'er's Large Deviation principle says that for every $x \in (0, C)$, 
 $C:= \lim_{\gb\nearrow \bar \gb} \gl'(\gb)\in (0,\infty]$, there exists 
$c(x)>0$ such that for 
\begin{equation}
\label{eq:sharpLD}
\bbP \bigg( \sum_{j=1}^n \go_j \,\ge \, n x \bigg) \stackrel{n \to \infty}\sim \frac{c(x)}{n^{1/2}} \exp \big( -n \gS(x) \big)\, ,
\end{equation}
where $\gS(x):= \sup_y [ xy - \gl(y)]$. The value of $c(x)$ is irrelevant for us, but it can be  found in \cite[Th.~1 and Th.~6]{cf:Petrov} along with a proof of \eqref{eq:sharpLD}.

 Now we  fix $\gb \in (0, \overline{\gb})$ and we define  $q:= \gl'(\gb)$: note that $q$ solves the conjugate Legendre problem $\gl(\gb)= \sup_x (\gb x - \gS(x))$, that is $\gl(\gb)= \gb q - \gS(q)$. Note that $q_1(\gb)=\gS(\gl'(\gb))$.
We choose a (sufficiently large) integer $\ell$ and for $j=0,1, \ldots$ we set
\begin{equation}
\label{eq:Ej}
E_j\,:=\, \bigg\{\go: \, \sum_{n=1}^\ell \go_{j \ell+n}\, \ge \, q \ell \bigg\}\, , 
\end{equation} 
so   that  $\{\ind_{E_j}\}_{j=0,1, \ldots}$ is an IID sequence of Bernoulli random variables of parameter $p(\ell)$, where
\begin{equation}
\label{eq:pell}
p(\ell)\, :=\, \bbP\left( E_1\right)\, \in \, \left(\frac{c(q)}{2\ell^{1/2}} \exp \left(-\ell \gS(q)\right), \frac{2c(q)}{\ell^{1/2}} \exp \left(-\ell \gS(q)\right) \right) \, .
\end{equation}
and the inclusion statement follows from 
\eqref{eq:sharpLD} for $\ell$ greater than a suitably chosen value that depends on $q$.
Associated   with  such a sequence we introduce the sequence of IID geometric random variables
$\{G_j\}_{j=1,2, \ldots}$, $G_i\in \{0,1, \ldots\}$, that count the number of failures before the  $i^{\textrm{th}}$ success in the Bernoulli sequence $\{\ind_{E_i}\}_{i= 0,1, \ldots}$, defined recursively as follows
\begin{equation}\
G_j:= \min \bigg \{ k = 0,1, \ldots \ : \sum_{i=0}^k \ind_{E_i}(\go)= j \bigg \} - \bigg(\sum_{m=1}^{j-1} G_m \bigg)  -j\, ,
\end{equation}
where the sum from $m=1$ to $j-1$ should be read as zero for $j=1$.
We call $\cN_N(\go)$ the  number of successes up to epoch $N$, that is 
$\cN_N(\go) = \sum_{j\le N/\ell} \ind_{E_j}(\go)$. Alternatively, $\cN_N(\go) = \sup \{k:\, \sum_{i=1}^k(G_k+1)\ell \le N\}$.
 By the law of large numbers,  $\bbP(\dd \go)$ almost surely we have 
 \begin{equation}
 \lim_{N \to\infty} \frac{\cN_N(\go)}N \,= \,\frac{p(\ell)}\ell \, .
 \end{equation}
The rare stretch lower bound strategy is based on selecting,
 once the disorder $\go$ is known, only the renewal and excursion trajectories $(\tau, \gD) \in \gO_{N, q}(\go)$
 for which the process stays above the interface  except in the $\cN_N(\go)$ successful $\ell$-blocks
 \begin{equation}
 \gO_{N, q}(\go) :=\, \bigg \{(\tau, \Delta):\,  \{  n\in \lint 1,N \rint  : \,   \Delta_n=1\} =  
 \union_{i \le \frac N\ell-1:\, \ind_{E_i}(\go)=1}  \lint i\ell+1, (i+1)\ell \rint
\bigg \}\, .
\end{equation}
In order to find a lower bound for  $\bP \left(\gO_{N, q}(\go) \right)$ it is sufficient to consider the strategy where the renewal only targets the extremities of the intervals $\lint i\ell+1, (i+1)\ell \rint$ for which $\ind_{E_i}(\go)=1$ plus the last point $N$, and choses the sign of each excursion adequately.
 We invite the reader to look at Figure~\ref{fig:1}. We obtain thus
 \begin{multline}
 \log \bP \left(\gO_{N, q}(\go) \right) \, \ge \, 
 \sum_{j=1}^{\cN_N(\go)} \log \left( \frac 12K\left(G_j \ell\right) \right)\ind_{\{G_i\ge 1\}}
 +
 \cN_N(\go) \log \left( \frac 12 K(\ell) \right)- 2 \log N\, ,
 \end{multline} 
 where the term $- 2 \log N$ comes from a rough estimate of the last excursion, which could be empty 
 and in any case it is not longer than $N$, hence for large $N$ its contribution is bounded from below by
 $\log (K(N)/3)$, and $1/3$ is used instead of the probability $1/2$ of being above the interface because 
 $K(\cdot)$ is asymptotically equivalent to a decreasing function but it is not necessarily decreasing (or non increasing), see 
 \cite[Th.~1.5.3]{cf:RegVar}.

\begin{figure}[htbp]
\centering
\includegraphics[width=14 cm]{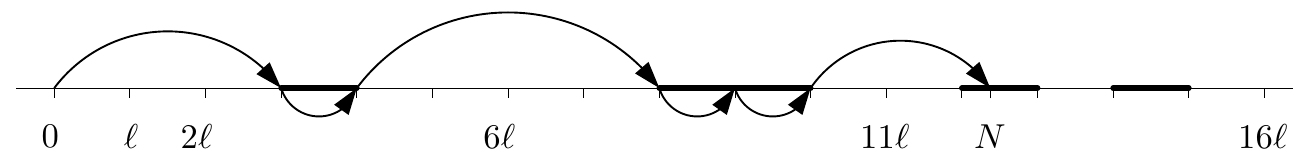}
\vskip-.2cm
\caption{\label{fig:1} 
The system is partitioned into blocks of length $\ell$ and the $j^{\textrm{th}}$ block is successful 
if the average of the $\go_j$'s in the block is at least $q>0$: these blocks are marked by a thick line and we see $5$ of them  in the example we consider. The variables $G_j$'s recursively count the unsuccessful blocks before a successful one. So for this illustration case, $G_1=3$, $G_2=4$, $G_3=0$, $G_4=2$, $G_5=1$ and
$G_6>1$. The excursions we choose are indicated by the arrows: they begin and terminate at the extremity of a block, except possibly the last one which ends in $N$ which is not necessarily a multiple of $\ell$. The excusions below the interface are always of length $\ell$ and they correspond to the $\ell$-blocks that are subset of $[0, N]$. All the other excursions are above.
}
\end{figure}

 Therefore using the convention $K(0):=1$, we obtain that  $\bbP(\dd \go)$ almost surely
 \begin{equation}
 \liminf_{N \to \infty} \frac 1N \log  \bP \left(\gO_{N, q}(\go) \right) \, \ge \, \frac{p(\ell)}\ell
\Big( \bbE \left[ \log K\left(G_1 \ell\right) \right]+ \log K(\ell) -2 \log 2 \Big)\, .
 \end{equation}
 In order to estimate the term  $\bbE \left[ \log K\left(G_1 \ell\right) \right]$, we consider the variable $g_1:= p(\ell)G_1$.
By the Potter bound \cite[Th.~1.5.6]{cf:RegVar} for every $a>0$ there exists $b>0$ such that
$L(y)/L(x)\le e^b \max((y/x)^a,(x/y)^a)$ for every $x,y\ge 1$, so we see that
\begin{equation} 
\label{difrence}
\begin{split}
 \left| \log K\left(G_1 \ell\right)- \log K\left(\ell /p(\ell)\right)\right| \, &=\, \left| \log K\left(g_1 \ell /p(\ell)\right)- \log K\left(\ell /p(\ell)\right)\right|
 \\ &\le\, 
   \left(b+(1+a) \vert \log g_1\vert\right) \ind_{\{g_1>0\}}+   2\log (\ell/p(\ell)) \ind_{\{g_1=0\}}\, ,
   \end{split}
 \end{equation}
and one directly
checks that the expectation of the right-hand side  in \eqref{difrence} is uniformly bounded in $\ell$.


In what follows $\gep>0$ 
 is an arbitrarily small constant that may change from line to line.
For $\ell$ sufficiently large
\begin{equation}
\begin{split}
 \liminf_{N \to \infty} \frac 1N \log  \bP \left(\gO_{N, q}(\go) \right) \, &\ge \,
  \frac{p(\ell)}{\ell} \left(
  \log \frac{p(\ell)}\ell + \log L\left(\frac\ell{p(\ell)}\right)+ \log L(\ell)- \log \ell - 2 \log 2
  \right)
 \\
 &\ge\,   \frac{p(\ell)}{\ell} \left(-\ell \gS(q)  +\log L\left(\frac 1 {p(\ell)}\right) -(5/2+\gep) \log \ell\right)
 \, ,
 \end{split}
 \end{equation}
 where the first inequality is obtained by replacing $G_1$ with $p(\ell)^{-1}$ by using \eqref{difrence}.
 The second inequality is obtained by using \eqref{eq:sharpLD} and by using again the Potter bound
 to replace $L(\ell/p(\ell))$ with $L(1/p(\ell))$ and to neglect $\log L(\ell)$.
 From this estimate 
 we readily obtain that $\tf(\gb, h)$ is bounded below by 
 \begin{align}
 \label{eq:estfor3}
 \liminf_{N \to \infty}
 \frac 1N &\log Z_{N, h} \big( \gO _{N, q}(\go) \big)\, \ge \, 
 \ell ( \gb q-\gl(\gb)+h)
 \lim_{N \to \infty} \frac{\cN(\go)}N +\liminf_{N \to \infty} \frac 1N \log  \bP \big(\gO_{N, q}(\go) \big)
\notag  \\
& \ge \,  \frac{p(\ell)}{\ell} \Big( \left(\gb q  -\gl(\gb) - \gS(q)+h\right) \ell+ \log L(p(\ell)^{-1})- (5/2+\gep) \log \ell
 \Big)
\notag  \\
 &\ge \,  \frac{p(\ell)}{\ell} \Big( h \ell-  (5/2+\gep)\log \ell +\log L(p(\ell)^{-1}) \Big)\, =: \,  \frac{p(\ell)}{\ell} g(h,\ell)\, ,
 \end{align} 
 where in the last line  we have used that $q$ is the optimizer of the (conjugate) Legendre problem and the very last step defines 
 $g(h,\ell)$.
 
 \smallskip
 
 It is now a matter of going through the three classes of slowly varying functions that we consider and see how large $\ell=\ell(h)$
 has to be chosen in order to guarantee a positive $g(h, \ell(h))$ gain. In what follows $h$ is chosen small, possibly smaller than a constant that depends also on $\gep$. Moreover $C>0$ is a constant that in all cases can be easily identified:
 
 \smallskip
 
 \begin{itemize}[leftmargin=*]
 \item[--] In the super-logarithmic case we have
 \begin{equation}
 \log L\left(p(\ell)^{-1}\right)\,\ge \, - \left(    \ell \gS(q)+ (\log \ell)/2 + C\right)^{1/\upsilon} \, \ge \, 
 -(1+ \gep) \ell^{1/\upsilon} \gS(q) ^{1/\upsilon}\, ,
 \end{equation}
 and we readily see that if $\ell \ge (1+ \gep)h^{-\upsilon/(\upsilon-1)}  \gS(q) ^{1/(\upsilon-1)}$ then there exists $c(\gep, \upsilon)>0$ such that
 $g(h, \ell)\ge c(\gep, \upsilon) h\ell$. Therefore \eqref{eq:estfor3} yields 
 \begin{equation}
 \label{eq:felb1}
 \tf(\gb, h) \, \ge \, c(\gep, \upsilon) h p(\ell) \, \ge \, \exp\left( - (1+\gep) h^{-\upsilon/(\upsilon-1)}  
 \gS(\gl'(\gb))^{\upsilon/(\upsilon-1)}\right)\, ,
 \end{equation}
 where in the last step we have used the explicit expression for $q$.
\item[--] In the logarithmic case
 \begin{equation}
 \log L\left(p(\ell)^{-1}\right) \ge  - \upsilon \log \big( \ell \gS(q) \big) -\frac  1 2\upsilon \log \log \ell  -C
 \, \ge \, -(1+ \gep) \upsilon \log \ell\, ,
 \end{equation}
 so that
 \begin{equation}
 g(h, u) \, \ge\, h\ell -\left( 5/2+v+\gep\right)  \log \ell\, ,
 \end{equation}
 and it suffices to choose $\ell \ge (5/2+\upsilon+2\gep) \log (1/h) /h$ to have 
 $g(h, u)\ge c(\gep, \upsilon) h \ell$. Therefore 
  \begin{equation}
   \label{eq:felb2}
 \tf(\gb, h) \, \ge \, c(\gep, \upsilon) h p(\ell) \, \ge \, \exp\left( -(5/2+\upsilon+\gep) \gS(\gl'(\gb)) \frac{\log (1/h)}h 
\right)\, .
 \end{equation}
\item[--] Last, in the sub-logarithmic case the computation is the same as in the logarithmic case because the iterated logarithm  in  the asymptotic of $L(x)$ is 
 irrelevant for the purpose of this computation and $L(x)$ can be replaced, with an error that can be hidden by $\gep$,
 by $1/\log (x)$, that is the logarithmic case with $\upsilon=1$. Hence the net result is 
\begin{equation}
   \label{eq:felb3}
 \tf(\gb, h) \, \ge \, c(\gep, \upsilon) h p(\ell) \, \ge \, \exp\left( - \left(7/2+\gep\right) \gS(\gl'(\gb)) \frac{\log (1/h)}h 
\right)\, .
 \end{equation} 
 \end{itemize}
 \smallskip
 
 The proof is complete once we recall that $\gS(\gl'(\gb))=
 \gb \gl'(\gb)-\gl(\gb)=q_1(\gb)$. 
 \qed
 
%
%
 \section{Improved lower bound in the sub-logarithmic  case}
 \label{sec:slowcase}
 
 Recall the definition \eqref{eq:q1q2} of
$ q_2(\gb)\, :=\,  \gl (2\gb)- 2 \gl(\gb)$,
 which is positive for $\gb>0$ and finite for $ \gb < \overline{\gb}/2$.  
 
 \begin{theorem}
 \label{th:slowcase}
 Assume that $L(\cdot)$ satisfies \eqref{slowdef} and that $q_2(\gb) < \infty$. Then for every $c> \upsilon+1$ there exists $h_0>0$ such that  for every $h \in (0, h_0)$
 \begin{equation}
 \label{eq:slowcase}
 \tf(\gb, h) \, \ge \, \exp\left( -c\, q_2(\gb) \frac{\log \log (1/h)}{h}\right)\, .
 \end{equation}
 \end{theorem}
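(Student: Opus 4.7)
The plan is to derive the lower bound by a second-moment argument applied to a suitably trimmed partition function. The rare-stretch strategy of Section~\ref{sec:rs} commits the polymer to a single favorable $\ell$-block per excursion, paying a large-deviation cost of order $e^{-\ell q_1(\gb)}$; this is why $q_1(\gb)$, rather than $q_2(\gb)=\gl(2\gb)-2\gl(\gb)$, controls the bound there. To produce $q_2(\gb)$ one must let the trajectory select blocks \emph{adaptively} among many candidates, exploiting disorder fluctuations without paying a large-deviation price. This is what the second-moment method captures: the relevant cost becomes combinatorial rather than large-deviation, and in the sub-logarithmic regime the trade gains precisely the factor $\log(1/h)/\log\log(1/h)$ that separates Theorem~\ref{th:rss} from the target bound.

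Concretely, I fix parameters $\ell=\ell(h)$ and $M=M(h,N)$, set $K=\lfloor N/\ell\rfloor$, and partition $\lint 1,N\rint$ into $K$ consecutive blocks of length $\ell$. Define the trimmed partition function $\hat Z_{N,\go}$ as $Z_{N,\go}$ restricted to the event $\cA_{N,M}$ that (i) every renewal $\tau_j$ lies in $\ell\bbN$, (ii) on each of the $K$ blocks $\gD$ is constant (either identically $0$ or identically $1$), and (iii) exactly $M$ of the $K$ blocks carry $\gD\equiv 1$. A direct computation gives $\bbE\hat Z_{N,\go}\asymp\binom{K}{M}\bigl(K(\ell)/2\bigr)^K e^{Mh\ell}$, while the identity $\bbE\exp(\gb\go_n(\gD_n^{(1)}+\gD_n^{(2)}))=\exp(\gl(\gb)(\gD_n^{(1)}+\gD_n^{(2)})+q_2(\gb)\gD_n^{(1)}\gD_n^{(2)})$ yields
\[
\frac{\bbE\hat Z_{N,\go}^2}{(\bbE\hat Z_{N,\go})^2}\,=\,\bbE\bigl[e^{q_2(\gb)\ell V}\bigr],
\]
where $V$ is the hypergeometric size of the intersection of two independent uniform size-$M$ subsets of $\lint 1,K\rint$.

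The next step is to tune $\ell,M,N$ so as to keep this ratio bounded (equivalently $M^2 e^{q_2(\gb)\ell}/K=O(1)$) while maximizing the first moment. Substituting the sub-logarithmic asymptotics $L(\ell)\sim c_L/(\log\ell(\log\log\ell)^\upsilon)$ into $\log\bbE\hat Z_{N,\go}$ and optimizing, for any prescribed $c>\upsilon+1$ one can choose $\ell$ of order $\log\log(1/h)/h$, together with $M$ slightly below the second-moment threshold and a correspondingly large $N$, so that
\[
N^{-1}\log\bbE\hat Z_{N,\go}\,\ge\,\exp\!\Big(-c\, q_2(\gb)\,\frac{\log\log(1/h)}{h}\Big).
\]
A Paley--Zygmund inequality, combined with the super-additivity of $N\mapsto\bbE\log Z_{N,\go}$ (see~\cite[Ch.~4]{cf:GB}) and the deterministic bound $Z_{N,\go}\ge K(N)/2$, converts this first-moment estimate into the claimed lower bound on $\tf(\gb,h)$.

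The main obstacle will be the second-moment control: since $V$ is weighted by the rapidly growing factor $e^{q_2(\gb)\ell V}$, a Gaussian or Poisson approximation of the hypergeometric law does not suffice, and one must estimate $\bbE[e^{q_2(\gb)\ell V}]$ by a direct combinatorial analysis, choosing $M$ somewhat below the naive threshold so that the tail contributions of $V$ are absorbed. This is precisely where the restriction $c>\upsilon+1$ originates, through the interplay between the second-moment constraint and the subleading $(\log\log\ell)^\upsilon$ correction in the sub-logarithmic expansion of $L(\ell)$. A secondary technical point is the passage from the first-moment estimate to a free-energy bound: one either needs a concentration inequality for $\log\hat Z_{N,\go}$, or a concatenation of independent length-$N$ blocks to upgrade the positive-probability statement from Paley--Zygmund into a first-order estimate for $\bbE\log Z_{KN,\go}$.
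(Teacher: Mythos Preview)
Your second-moment framework is the right idea, and the computation that identifies the overlap cost with $q_2(\gb)$ is correct. However, the specific trimming you propose has a fatal entropic leak that makes $\bbE\hat Z_{N,\go}\to 0$, so Paley--Zygmund yields nothing.

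Concretely, your event $\cA_{N,M}$ forces a renewal at \emph{every} multiple of $\ell$, so $\log\bbE\hat Z_{N,\go}\approx\log\binom{K}{M}+K\log K(\ell)-K\log 2+Mh\ell$. In the sub-logarithmic regime $\log K(\ell)\sim-\log\ell$, while the combinatorial entropy is at most $K\log 2$ and the energy at most $Kh\ell$. With your choice $\ell\sim c_1\log\log(1/h)/h$ one has $h\ell\sim c_1\log\log(1/h)$ but $\log\ell\sim\log(1/h)$, so
\[
\log\bbE\hat Z_{N,\go}\,\le\, K\bigl(h\ell-\log\ell+O(\log\log\ell)\bigr)\,\sim\,-K\log(1/h)\,\longrightarrow\,-\infty.
\]
This is precisely the phenomenon flagged after Theorem~\ref{th:rss}: in the sub-logarithmic case the cost of pinning the renewal at a dense set of prescribed points is not negligible, and your construction makes it the dominant term. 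No choice of $M\le K$ rescues this, and taking $\ell$ large enough to balance the cost ($\ell\gtrsim(1/h)\log(1/h)$) would reproduce the rare-stretch exponent, not the target one.

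The paper's construction avoids this by an \emph{alternating} scheme: $m$ pairs of excursions, each pair consisting of a long excursion of length in $[M,M^2]$ above the interface (no energy, and almost no entropic cost per unit length since $M$ is exponentially large) followed by a short excursion of length in $[1,k]$ below (collecting energy), capped by one very long excursion to $N$. Here $k$ plays the role of your $\ell$, but the total number of renewal points is only $2m+1$ with $m=(\log M)^2$, so the entropic cost is $O((\log M)^3)$, negligible against $N=M^2(\log M)^3$. The second-moment control then reduces to showing that two independent copies of this alternating walk rarely place their short excursions at the same location; this is where the spacing condition $M\ge e^{c_2 k}$ with $c_2>q_2(\gb)$ enters, and the first-moment positivity forces $c_1>\upsilon+1$ through the balance in \eqref{eq:asympt0.1}--\eqref{eq:asympt0.2}. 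To repair your argument you would need to replace the uniform $\ell$-blocks by something that keeps the density of renewal points of order $1/N$ times a polylog, e.g.\ exactly this long/short alternation.
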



\noindent
\emph{Proof.}
We introduce 
\begin{equation}
\label{parameterz}
k = k(h) :=  c_1 \frac{\log \log (1/h)}{h}\, , \ \  M := \exp\left( c_2 k\right),
\ \ N=M^{2} (\log M)^3\
\text{ and } m= \frac N {M^2\log M}\, ,
\end{equation}
with $c_1$ and $c_2$ positive constants that will be chosen later on. At times we will also do as if $k$, $M$ and $m$ were integers.
We introduce the event
\begin{multline}
\label{eq:Gm}
G_m\, :=\, \Big\{(\tau, \gD):\, 
\tau_{2i+1}-\tau_{2i} \in [M, M^2] \text{ and }
\tau_{2i+2}-\tau_{2i+1} \in [1,k] \text{ for }  i \in \lint 0, m-1\rint,\\
\gD_{\tau_{j}+1}= \frac{1-(-1)^j}2\text{ for } j\in \lint 0,  2m\rint  \text{ and }
 \tau_{2m+1}=N
\Big\}\, .
\end{multline}
In words, we focus on the trajectories that make alternatively a {\sl long} and a {\sl short} excursion, and this is repeated a fixed number of times ($m$) before reaching the point $N$: long jumps do not collect any energy
 ($\gD$ in the excursion is zero), while the short ones do ($\gD$ in the excursion is one). 
The last excursion is long, hence it does not collect any energy, but we stress that it is very long: in fact the $2m$ excursions that precede 
the last excursion add up at most to $m(M^2+k)= (1+o(1)) N/ \log M$, and therefore the length of the last excursion is  asymptotically equivalent to the size of the system as $h \searrow 0$.
 We then consider  the partition 
function restricted to these trajectories:
\begin{equation}
\tilde Z_{N, \go}\, :=\, Z_{N, \go}\left( G_m \right)\, . 
\end{equation}
We have 
\begin{align}
\label{eq:PZ}
\bbE \log Z_{N, \go} \, & =\, \bbE \Big[ \log Z_{N, \go}; \, \tilde Z_{N, \go} \ge \tfrac 12 \bbE \tilde Z_{N, \go} \Big]
+
\bbE \Big[ \log Z_{N, \go}; \, \tilde Z_{N, \go} < \tfrac 12 \bbE \tilde Z_{N, \go} \Big]
\notag \\
 &\ge  \bbP \Big(\tilde Z_{N, \go} \ge \tfrac 12 \bbE \tilde Z_{N, \go} \Big)  \log \Big( \tfrac12 \bbE\tilde Z_{N, \go} \Big)+ \bbP \Big(\tilde Z_{N, \go} < \tfrac 12 \bbE \tilde Z_{N, \go} \Big) \log \big( K(N)/2 \big) \notag\\
&\ge  \frac{\bbE \big[\tilde Z_{N, \go}\big]^2}{4\bbE \big[ (\tilde Z_{N, \go})^2 \big]}  \log \Big( \tfrac12 \bbE\tilde Z_{N, \go} \Big)
 - 2 \log N.
\end{align}
where in the first inequality we used $Z_{N, \go}\ge K(N)/2$ (that holds in full generality) and in the second one
we have applied the Paley-Zygmund inequality and we have assumed that 
$ \log( (1/2) \bbE\tilde Z_{N, \go} )\ge 0$, that is 
$\bbE \tilde Z_{N, \go}\ge 2$. 

To conclude we need to estimate the first two moments of $\tilde Z_{N, \go}$.

\subsubsection{First moment estimate}
We have
\begin{equation}
\label{eq:firstmom}
\begin{split}
\bbE \tilde Z_{N, \go}\,&=\, 
\bE \bigg[
\exp \Big( h \sum_{i=1}^m (\tau_{2i+2}-\tau_{2i+1}) \Big); \, G_m
\bigg]
\\&\ge \, \bigg(\frac 12 \sum_{n=M}^{M^2} K(n)
\bigg)^m \bigg( \frac 12 \sum_{n=1}^k e^{hn}K(n)\bigg)^m \frac{K(N)}3\, ,
\end{split}
\end{equation}
where the inequality comes from the last excursion which is shorter than $N$ ($3$ is present instead of $2$ because $K$ is  monotonic only in an asymptotic sense).
It follows from our sub-logarithmic decay  assumption (recall \eqref{decaytilde}) and our choice of parameters \eqref{parameterz} that when $h$ tends to zero
\begin{equation}
\label{eq:asympt0.1}
\sum_{n=M}^{M^2} K(n)\, =\, \tilde L(M)-\tilde L(M^2)=  \frac{c_L\log 2}{\left\vert \log h\right\vert^\upsilon}(1+o(1)).
\end{equation}
For the second term instead  we split   $\sum_{n=1}^k e^{hn}K(n)$
according to whether $n$ is smaller or larger than $1/h$ and we see that the first 
sum is bounded by $e$. For the second sum, we remark that we have for $h$ sufficiently small 
\begin{multline}
\label{eq:asympt0.2}
 \sum_{n= \lfloor 1/h\rfloor}^{ k}  e^{hn}K(n)\, =\, e^{hk} \sum_{j=0}^{k- \lfloor 1/h\rfloor} e^{-hj} \frac{L(k-j)}{k-j}\\
  \ge  e^{hk}
 \frac{L(k)}{hk}(1-\gep) \ge  \frac{c_L \vert \log h \vert ^{c_1-1}}{c_1 (\log \log (1/h))^{\upsilon+1}}(1-2\gep)  \, .
\end{multline}
The first inequality can be achieved by considering the  sum restricted to $j\le \sqrt{k/h}$
(or anything that tends to infinity faster than $1/h$ but much slower than $k$) and observe that, uniformly for these values of $j$, we have 
$\frac{L(k-j)}{k-j}=(1+o(1)) \frac{L(k)}{k}$.
The last step is a consequence of the sub-logarithmic decay assumption and of  \eqref{parameterz}.
The interested reader can check that both inequalities correspond to asymptotic equivalences.

It is now sufficient to observe  that \eqref{eq:firstmom}-\eqref{eq:asympt0.2} readily imply that 
for every $\gep>0$ we can find $h_0>0$ so that we have 
\begin{equation}
\label{eq:addGB1}
\bbE \tilde Z_{N, \go}\,\ge \, \vert \log h \vert ^{c_1-\upsilon -1-\gep}\, ,
\end{equation}
for every $h\in (0, h_0)$. Therefore 
 if 
\begin{equation}
\label{eq:condc1}
c_1\, >\, \upsilon+1\, , 
\end{equation} 
 we have  in particular, possibly redefining $h_0$, that 
$\bbE \tilde Z_{N, \go}\ge 2$ (recall that the last step in \eqref{eq:PZ} was made under this assumption).

\subsubsection{Second moment estimate} 
We  aim at showing that for $h \searrow 0$
\begin{equation}
 \label{eq:addGB2}
 \frac{\bbE \big[
 (\tilde Z_{N, \go}
 )^2
 \big]}{\bbE \big[
\tilde Z_{N, \go}
 \big]^2}\, =\, 1 +o(1)\, ,
\end{equation} 
and we 
start by observing that
 \begin{equation}
 \label{eq:startZ2}
 \frac{\bbE \big[
 (\tilde Z_{N, \go}
 )^2
 \big]}{\bbE \big[
\tilde Z_{N, \go}
 \big]^2}\, =\, \tilde \bE^{\otimes 2}_h \bigg[
 \exp \bigg( q_2(\gb) \sum_{n=1}^N \gD^{(1)}_n \gD^{(2)}_n \bigg)
 \bigg]\, ,
 \end{equation}
where $q_2(\gb)$ is given in \eqref{eq:q1q2} and  the measure $\tilde \bP_h$ is defined by
\begin{equation}
\frac{\dd \tilde \bP_h}{\dd \bP}(\tau, \gD):= \frac{1}{\bbE \big[
\tilde Z_{N, \go}
 \big]} \exp \bigg( h \sum_{i=1}^m |\tau_{2i}-\tau_{2i-1}| \bigg) \ind_{G_m}(\tau, \gD)\, .
\end{equation}
In order to obtain an upper bound for the right hand side of \eqref{eq:startZ2}, we are going to prove a bound for the expectation with respect to $\gD^{(1)}$ which is uniform for every realization of  $\gD^{(2)}$ in $G_m$.
Let $A$ denote an arbitrary set of the form 
\begin{equation}\label{leseta}
A:= \bigcup_{i=1}^\infty \lint a_i,b_i\rint, 
\end{equation}
where $a_1\ge M$ and for all $i= 1,2, \ldots$
\begin{equation}\label{spacingz}
(b_i-a_i)\le k \quad \text{ and } \quad  b_{i+1}-a_{i}\ge M.
\end{equation}
 Note that it would be more natural to consider the union to stop at $m$ but it is more practical for us to consider  here an infinite set.
To conclude it is sufficient  to show that, uniformly over all possible sets $A$
\begin{equation}
\label{eq:nts3}
 \tilde \bE_h \bigg[
 \exp \bigg( q_2(\gb)\sum_{i=1} ^{m} \big \vert \lint \tau_{2i-1}+1, \tau_{2i} \rint \cap A \big \vert \bigg)
 \bigg] \stackrel{h \searrow 0}= 1+ o(1)\, .
 \end{equation}
Our first step is to replace  $\tilde \bP_h$ by a nicer measure under which jumps are independent.
For $m\in \bbN$ let $\hat \bP^m_h$ be a measure on a finite sequence $\{\tau_i\}_{i\in \lint 0, 2m\rint}$ for which $\tau_0=0$,  the increments are independent and for which 
for every $i\in \lint 0,m-1\rint$
\begin{equation}
\label{eq:indeps}
\begin{split}
\hat \bP^m_h \left(\tau_{2i+1}-\tau_{2i}= n\right)&= \frac{K(n)\ind_{n\in [M,M^2]}}{\sum_{n=M}^{M^2} K(n)},\\
\hat \bP^m_h\left(\tau_{2i+2}-\tau_{2i+1}= n\right)&= \frac{e^{hn} K(n)\ind_{n\in [1,k]}}{\sum_{n=1}^k e^{kn} K(n)}.
\end{split}
\end{equation}
Analogously we define also the infinite random sequence $\{\tau_i\}_{i=0,1, \ldots}$
and its law is denoted by  $\hat \bP_h$. We set 
\begin{equation}
\label{eq:Gmtilde}
\tilde G_m\, :=\, 
 \left\{\tau:\, 
\tau_{2i+1}-\tau_{2i} \in [M, M^2] \text{ and }
\tau_{2i+2}-\tau_{2i+1} \in [1,k] \text{ for }  i \in \lint 0, m-1\rint
\right\}\, .
\end{equation}
We will commit abuse of notation in using the same notations for random variables and standard, \textit{i.e.}\ non-random, variables.

\begin{lemma}
\label{th:hublemma}
Let $\tilde \bP^m_h$ be the distribution of $\{\tau_i\}_{i\in \lint 0, 2m\rint}$ under $\tilde \bP_h$.
Then  we have 
\begin{equation}
\label{eq:hublemma}
\sup_{\{\tau_i\}_{i\in \lint 0, 2m\rint}\in \tilde G_m}
\frac{\dd \tilde \bP^m_h}{\dd \hat \bP^m_h} \left(\tau_0, \tau_1, \ldots, \tau_{2m}\right)\stackrel{h \searrow 0}=1+o(1)\, .
\end{equation}
\end{lemma}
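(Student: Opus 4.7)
The plan is to write the Radon-Nikodym derivative $d\tilde\bP^m_h/d\hat\bP^m_h$ explicitly on $\tilde G_m$, observe that almost all factors cancel, and reduce \eqref{eq:hublemma} to a slow-variation estimate on the length $N-\tau_{2m}$ of the last (forced) excursion.

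For the first step, I would unpack both measures on an arbitrary trajectory $(\tau_0,\ldots,\tau_{2m})\in\tilde G_m$. By \eqref{eq:indeps}, $\hat\bP^m_h$ equals a product of $K$-weights and exponentially tilted $K$-weights divided by the normalization $\bigl(\sum_{n=M}^{M^2}K(n)\bigr)^{m}\bigl(\sum_{n=1}^{k}e^{hn}K(n)\bigr)^{m}$. For $\tilde\bP^m_h$, the indicator $G_m$ freezes the $\gD$-sequence to a single configuration of $\bP$-probability $(1/2)^{2m+1}$ and forces $\tau_{2m+1}=N$; marginalizing $\gD$ and the last jump yields the same product of $K$-weights and tilts, multiplied by $(1/2)^{2m+1}K(N-\tau_{2m})/\bbE\tilde Z_{N,\go}$. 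Decomposing $\bbE\tilde Z_{N,\go}$ along the exact same sum-over-increments (the identity implicit in the inequality \eqref{eq:firstmom}) produces $(1/2)^{2m+1}$ times the $\hat\bP^m_h$-normalization times $\hat\bE^m_h[K(N-\tau_{2m})]$. All common factors cancel and
\begin{equation*}
\frac{d\tilde\bP^m_h}{d\hat\bP^m_h}(\tau_0,\ldots,\tau_{2m}) \,=\, \frac{K(N-\tau_{2m})}{\hat\bE^m_h[K(N-\tau_{2m})]}.
\end{equation*}

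For the second step, I would show that $K(N-\tau_{2m})/K(N)=1+o(1)$ uniformly over $\tilde G_m$ as $h\searrow 0$. The support constraints of $\tilde G_m$ give $\tau_{2m}\le m(M^2+k)$, and the choices in \eqref{parameterz} yield $m=(\log M)^2$ and thus $mM^2=N/\log M$; since $M=\exp(c_2k)$ is super-polynomial in $k$, we have $k\ll M^2$, and therefore $\tau_{2m}\le(N/\log M)(1+o(1))$ uniformly on $\tilde G_m$. In particular $(N-\tau_{2m})/N\to 1$ uniformly, and the uniform convergence theorem for slowly varying functions \cite[Th.~1.2.1]{cf:RegVar} applied to $L$ gives $K(N-\tau_{2m})/K(N)=1+o(1)$ uniformly. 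Averaging, the same holds for $\hat\bE^m_h[K(N-\tau_{2m})]/K(N)$, and dividing establishes \eqref{eq:hublemma}.

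There is no substantial obstacle here: the lemma is the quantitative expression of the intuition that conditioning $\tau_{2m+1}=N$ in $\tilde\bP_h$ is harmless as soon as the forced last excursion is guaranteed to have length $N(1-o(1))$, a scale on which the slowly varying tail $K$ is essentially flat. The only points requiring minor care are that the $\gD$-marginalization contributes merely the factor $(1/2)^{2m+1}$ (since $\gD$ is deterministic on $G_m$) and that the bound $\tau_{2m}=o(N)$ is deterministic rather than probabilistic, which is precisely what makes the uniformity in \eqref{eq:hublemma} effortless.
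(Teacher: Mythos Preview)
Your proposal is correct and follows essentially the same approach as the paper: both write out the two densities explicitly, observe that all the $K$- and tilt-factors for the first $2m$ increments cancel, and reduce the ratio to a comparison of $K(N-\tau_{2m})$ across trajectories in $\tilde G_m$, which is handled by the deterministic bound $\tau_{2m}\le m(M^2+k)=o(N)$ and slow variation of $K$. Your formulation via $K(N-\tau_{2m})/\hat\bE^m_h[K(N-\tau_{2m})]$ is in fact a slightly cleaner packaging of the paper's argument, which instead bounds the ratio of the two normalizing sums by $\sup_{\tau,\tau'}K(N-\tau_{2m})/K(N-\tau'_{2m})$.
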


\medskip 

\begin{proof}
$\hat \bP^m_h(\tau_0, \tau_1, \ldots, \tau_{2m})$ can be directly expressed from \eqref{eq:indeps}, but it is helpful
to write it in the more implicit fashion:
\begin{equation}
\label{eq:denom}
\frac{\prod_{i=1}^m K(\tau_{2i-1}-\tau_{2i-2})K(\tau_{2i}-\tau_{2i-1})e^{h(\tau_{2i}-\tau_{2i-1}) }
}
{
\sum_{\tau'\in \tilde G_m}\prod_{i=1}^m K(\tau'_{2i-1}-\tau'_{2i-2})K(\tau'_{2i}-\tau'_{2i-1})e^{h(\tau'_{2i}-\tau'_{2i-1}) }
}\, .
\end{equation}
On the other hand $\tilde \bP^m_h(\tau_0, \tau_1, \ldots, \tau_{2m})$ instead is
\begin{equation}
\label{eq:num}
\frac{\prod_{i=1}^m K(\tau_{2i-1}-\tau_{2i-2})K(\tau_{2i}-\tau_{2i-1})e^{h(\tau_{2i}-\tau_{2i-1}) } K(N- \tau_{2i})
}
{
\sum_{\tau'\in \tilde G_m}\prod_{i=1}^m K(\tau'_{2i-1}-\tau'_{2i-2})K(\tau'_{2i}-\tau'_{2i-1})e^{h(\tau'_{2i}-\tau'_{2i-1}) }
K(N- \tau'_{2i})} \, ,
\end{equation}
and the denominator of this expression cannot be easily simplified. 
But for every $\tau\in \tilde G_m$ we have that $N-\tau_{2m}\ge N-m(M^2+k)\ge N(1-2/ \log M)$. Therefore  
we have that the ratio
\begin{equation}
\frac{K(N- \tau_{2i})}{K(N- \tau'_{2i})} \,=\,  1+o(1)\, ,
\end{equation}
for $h \searrow 0$,
uniformly in $\tau$ and $\tau'$ in $\tilde G_m$.
But the ratio of \eqref{eq:num} and \eqref{eq:denom}
can be bounded precisely by this ratio and therefore Lemma~\ref{th:hublemma} is proven.
\end{proof}

To conclude we are going to show that for $A$ satisfying \eqref{leseta} and any $j$
\begin{equation}
\label{sqwash}
 \hat \bE_h \bigg[
 \exp \bigg( q_2(\gb)\sum_{i=1} ^{j} \big| \lint \tau_{2i-1}+1, \tau_{2i} \rint \cap A \big| \bigg)
 \bigg] \le  \left( 1 +e^{kq_2(\gb)}\frac{ C k \log k \log M}{M}\right)^j\, ,
 \end{equation}
 for a $C>0$ depending only on $L(\cdot)$. It is in fact straightforward to 
 check that  with the choices made in \eqref{parameterz} the right-hand side is equal to $1+o(1)$ for $j=m$ (so  \eqref{eq:nts3}
 and therefore \eqref{eq:addGB2} hold true)
  if 
 \begin{equation}
\label{eq:condc2}
c_2\,>\, q_2(\gb)\,.
\end{equation} 
 
We prove \eqref{sqwash} by induction. Let us start with the simpler case $j=1$.
 We have 
 \begin{equation}
 \hat \bE_h \Big[
 \exp \Big( q_2(\gb) \big | \lint \tau_{1}+1, \tau_{2} \rint \cap A \big| \Big)
 \Big] \le 1+ e^{kq_2(\gb)} \hat \bP_h \big( \lint \tau_{1}+1,\tau_1+k \rint \cap A \ne \emptyset \big)\, .
 \end{equation}
Provided $k/M\le 1/2$ we have
\begin{equation}
\sum_{i=1}^{\infty} \sum_{n=a_i-k}^{b_i} K(n)\ind_{\{n\in [M,M^2]\}}
\le \sum_{i=1}^{M} \sum_{n=a_i-k}^{b_i} \frac{c}{n}\le  c \sum_{i=1}^{M} \sum_{n= iM-k}^{iM+k} \frac{1}{n}\le 
\frac{ 3c k \log M}{M}.
\end{equation}
 where in the first inequality  we used that $K(n)\le c n^{-1}$ and that $a_i\ge iM$  to show that all terms beyond the $M$ first ones do not participate in the sum.
 In the second inequality we used again the fact that $a_i\ge iM$ and $b_i-a_i\le k$ to bound each sum   $\sum_{n=a_i-k}^{b_i} (1/n)$. Now we observe that
 \begin{equation}
 \begin{split}
 \hat \bP_h \big( \lint \tau_{1}+1,\tau_1+k \rint \cap A \ne \emptyset \big)\, &=\, 
 \frac{
 \sum_{i=1}^{\infty} \sum_{n=a_i-k}^{b_i} K(n)\ind_{\{n\in [M,M^2]\}}
 }
 {\sum_{n=M}^{M^2}K(n)}\\ & \le\, \frac{ 4c\, k \log k \log M}{c_L \log 2\,  M}\,=: \frac{ C k \log k \log M}{ M}\, ,
  \end{split}
 \end{equation}
 where we have used \eqref{eq:asympt0.1} -- recall that $\log k = (1+o(1)) \vert \log h \vert$ -- and the last step defines
~$C$.

 \smallskip
 
 To prove the induction step, we condition with respect to $\tau_{2j}$.
 Let us consider 
 \begin{equation}
 A'\, :=\,  (A-\tau_{2j})\cap [M,\infty)\, .
 \end{equation} 
  Using independence of the jumps and the fact that $\tau_{2j+1}-\tau_{2j}\ge M$,
 \begin{multline}
  \hat \bE_h \bigg[
 \exp \bigg( q_2(\gb)\sum_{i=1} ^{j} \big|\lint \tau_{2j+1}+1, \tau_{2j+2} \rint \cap A \big| \bigg)\ \Big \vert \ \{\tau_{i}\}_{i\in \lint 0,2j\rint}
 \bigg] \\ =   \hat \bE_h \Big[
 \exp \Big( q_2(\gb) \big |\lint \tau_{1}+1, \tau_{2} \rint \cap A' \big| \Big) \Big].
 \end{multline}
Noting that $A'$ is of the type described in \eqref{leseta}-\eqref{spacingz} we can conclude that 
\begin{equation}
  \hat \bE_h \bigg[
 \exp \bigg( q_2(\gb)\sum_{i=1} ^{j} \big \vert \lint \tau_{2j+1}+1, \tau_{2j+2} \rint \cap A \big \vert   \bigg)\ 
 \Big\vert \ \{\tau_{i}\}_{i\in \lint 0,2j\rint}
 \bigg]\le  \left( 1 +e^{kq_2(\gb)}\frac{ C k \log k\log M}{M}\right),
\end{equation}
 which concludes the induction step.

 \subsubsection{Completion of the proof}
 Going back to \eqref{eq:PZ} and using \eqref{eq:addGB1} and \eqref{eq:addGB2}, which require 
 $c_1$ and $c_2$ to be  chosen according to \eqref{eq:condc1} and \eqref{eq:condc2},
 for $h $ sufficiently small
 we obtain
 \begin{align}
 \frac 1N \bbE \log Z_{N, \go}
 \, &\ge \, \frac m {N} \frac{(c_1- \upsilon-1 -\gep)}5 \log \log (1/h) - 2 \frac{\log N} N\,  \notag\\ 
 &\ge \frac 1{M^2 \log M}\, \ge\, h^2 
 \exp\left(-c_1 c_2 \frac {\log \log (1/h)}h\right)
 \,, 
 \end{align}
 and the proof of Theorem~\eqref{th:slowcase} is complete. 
 \qed

\section{Upper bound based on a global change of measure: proof of Theorem~\ref{th:general}}\label{genproof}

\label{sec:ub}
Instead of applying  Jensen's inequality directly   for $\bbE\log Z_{N,\go}$  (which yields the usual annealed bound), we can 
first split   $\log Z_{N,\go}$ in two terms and obtain 
\begin{equation}
\label{eq:fJ}
\bbE \log Z_{N,\go}=\bbE \log \left( f(\go) Z_{N,\go} \right)-  \bbE \log  f (\go) \, \le \, \log \bbE\left[ f(\go) Z_{N,\go}\right]\, - \bbE \log \left[ f(\go) \right] \, ,
\end{equation}
 where $f$ is an arbitrary positive measurable function.
We choose to apply the above inequality for a function $f$ which has the effect of penalizing  environments $\go$ which have small probability but contribute for most of the expectation  $\bbE[Z_{N,\go}]$: we select  a collection of events whose probability is small under $\bbP$, but large under the 
size-biased measure $\tilde \bbP_N(\dd \go):= \frac{Z_{N,\go}}{\bbE[Z_{N,\go}]} \, \bbP(\dd \go)$.  
What helps us in our choice of $f$ is that 
the size biased measure has a rather explicit description: it can be obtained starting with $\bbP$ and tilting the law of the $\go_n$  along a randomly chosen renewal trajectory.

\smallskip

We choose to penalize stretches of environment where $\go$ assumes  unusually large values. 
Let $k(h)$ be a large positive integer. The exact value of $k$ is to be fixed at the end of the proof as the result of an optimization procedure, 
but let us mention already that we choose it of the form $k(h)= h^{-1}\gp(h)$, where $\gp(h)\ge 1$ is a slowly varying function which tends to infinity when $h$ goes to zero.
For $n,m\in \lint 0,  N\rint$, with $m>n$,  and $b\in (0,1)$
 we introduce the  event
\begin{equation}
\label{eq:defce} 
\cE(n,m)= \cE_\gb(n,m) \, :=\, \bigg \{ \go:\, \sum_{j=n+1}^m
 \go_j \,\ge\, b \gl'(\gb) (m-n) \bigg \}\, .
\end{equation}
We define the \emph{penalizing} density
\begin{equation}
f(\go)\, :=\,
\exp \Bigg(- 4h 
 \sumtwo{n, m \in \lint 0, N\rint}{ n<m, \, m-n >k} (m-n)\ind_{\cE(n,m)} (\go) \Bigg)\, .
\end{equation}
To control the first term on the right in \eqref{eq:fJ}, it is sufficient to notice that by the standard Chernoff bound, there exists a constant
$c_1(\gb)$ (which can be taken as large as the Large Deviation function $\gS(b \gl'(\gb))$) such that for every $m\ge n$,
$\bbP\left(\cE(n,m)\right)\le e^{-c_1(\gb) (m-n)}.$
Hence 
\begin{align}
\bbE \log \left( f^{-1}(\go) \right)\, & =\, 
4h \sumtwo{n, m \in \lint 0, N\rint}{ n<m, \, m-n >k}  (m-n)
 \bbP\left(\cE (n,m)\right) 
 \le \, 4h  \sum_{n=0}^{N-1}\sum_{l>k}  e^{-c_1(\gb)l} \\
&\le \, \frac{4hN}{c_1(\gb)} 
\exp(-c_1(\gb) k) \, \le \,  N \exp(-c_1(\gb) k)\,, 
\end{align}
where the last inequality requires $h\le c_1(\gb)/4$.
Therefore 
\begin{equation}
\label{eq:conclusion1}
\limsup_{N \to \infty}\frac 1N  \bbE\left[ \log f^{-1}(\go) \right]  \,\le \, \exp(-c_1(\gb) k)\, .
\end{equation}
\medskip

Now we turn to the second term in the right-hand side of \eqref{eq:fJ}, and prove that $\bbE\left[ f(\go) Z_{N,\go}\right]\le 1$ for $N$ large, provided that $h$ is sufficiently small and $k = h^{-1} \gp(h)$ with $\gp(\cdot)$ chosen in \eqref{def:phi}, so that the upper bound on the free energy is given by \eqref{eq:conclusion1}.
We have
\begin{equation}
\label{eq:second}
\bbE\left[ f(\go) Z_{N,\go}\right]\, = \, \bE \bigg[ \bbE \bigg[ 
f(\go) \exp \bigg( \sum_{n\in A} \left( \gb\go_n - \gl(\gb)\right)  \bigg) 
\bigg] \exp\left(h \vert A \vert \right); N \in \tau \bigg]\, ,
\end{equation}
where 
\begin{equation}\label{eq:defdea}
A= A(N)\, :=\, \bigcup_{j\in \lint 1,\cM_N\rint: \, \iota_j=1} \lint \tau_{j-1}+1,\tau_j\rint ,
\end{equation}
and $\cM_N$ is the integer such that $\tau_{\cM_N}=N$.
We have 
\begin{multline}
\label{eq:second2}
\bbE \bigg[ 
f(\go) \exp \bigg(\sum_{n\in A} \left( \gb\go_n - \gl(\gb)\right) \bigg)
\bigg]\\
 \le  \,
\bbE \bigg[ \bigg(
\prodtwo{j\in \lint 1,\cM_N\rint}{\iota_j=1, \tau_j-\tau_{j-1}\ge k} e^{-4h(\tau_j-\tau_{j-1}) \ind_{\cE (\tau_j,\tau_{j-1})}} \bigg)
\exp \bigg(\sum_{n\in A} \left( \gb\go_n - \gl(\gb) \right) \bigg)
\bigg] \\
= \,
\bbE \bigg[  \prodtwo{j\in \lint 1,\cM_N\rint}{\iota_j=1, \tau_j-\tau_{j-1}\ge k} 
e^{ -4h (\tau_j-\tau_{j-1}) \ind_{\cE (\tau_j,\tau_{j-1})}+\sum_{n\in \lint \tau_{j-1}+1,\tau_j \rint} \left( \gb\go_n - \gl(\gb)\right)}
\bigg]
\\  =  \,
\prodtwo{j\in \lint 1,\cM_N\rint}{\iota_j=1, \tau_j-\tau_{j-1}\ge k} \bbE_{\gb}\left[ 
e^{ -4h (\tau_j-\tau_{j-1}) \ind_{\cE (\tau_j,\tau_{j-1})}}
\right]
\, ,
\end{multline}
where in the last line the expectation is taken with respect to the probability measure $\bbP_{\gb}$ under which all the variables $\go_n$ are still IID, but with tilted
marginal density
\begin{equation}\label{eq:defpb}
\bbP_{\gb}(\go_1\in \dd u)\, =\, e^{\gb u-\gl(\gb)}\bbP(\go_1\in \dd u)\, .
\end{equation}

Note that the term $\exp\left(\sum_{n\in \lint \tau_{j-1}+1,\tau_j \rint} \left( \gb\go_n - \gl(\gb)\right)\right)$ is  a probability density.
In particular $\bbE_\gb[\go_n]=\gl'(\gb) >0$, and Chernoff bounds yields again for every $m\ge n$, 
\begin{equation}
\bbP_\gb\left( \cE (n,m)^{\cc}\right)\, \le\,  e^{-c_2(\gb)(m-n)}\, ,
\end{equation}
for an adequate choice of $c_2(\gb)>0$ that depends also on $b$.
Hence, for $h$ sufficiently small (depending on $c_2(\gb)$), for all $m-n>k$ we obtain 
\begin{equation}
\begin{split}
\bbE_{\gb}\left[ 
e^{- 4h(m-n) \ind_{\cE(n,m)}}\right] \, &=\, e^{- 4h(m-n)} +  \bbP_\gb \big( \cE (n,m)^{\cc} \big)(1-e^{- 4h(m-n)})
\\
& \le e^{- 4h(m-n)} + e^{-c_2(\gb)(m-n)}\, \le \, e^{-2h(m-n)}\, .
\end{split}
\end{equation}
Therefore, going back to \eqref{eq:second2}, we see that 
\begin{equation}
\bbE \bigg[ 
f(\go) \exp \bigg(\sum_{n\in A} \left( \gb\go_n - \gl(\gb)\right) \bigg) 
\bigg]
\, \le \, \prodtwo{j\in \lint 1,\cM_N\rint}{\iota_j=1, \tau_j-\tau_{j-1}\ge k}
\exp \big( -2 h (\tau_j-\tau_{j-1}) \big)\, ,
\end{equation}
and  this estimate, inserted  into  \eqref{eq:second},  is sufficient to transform the reward $(+h)$ into a penalty $(-h)$ for excursions of length larger than $k$.
We thus obtain
\begin{equation}
\label{eq:second3}
\begin{split}
\bbE\left[ f(\go) Z_{N,\go}\right]\,
&\le \, 
\sum_{n=1}^N \sum_{\ell \in \bbN ^n: \vert\ell \vert=N}
\prod_{j=1}^n K(\ell_j) \left( \frac 12 + \frac 12 e^{h \ell_j (\ind_{\{\ell_j\le k\}}-\ind_{\{\ell_j>k\}})}\right)
\\
&=: \, 
\sum_{n=1}^N \sum_{\ell \in \bbN ^n: \vert\ell \vert=N}
\prod_{j=1}^n K_k(\ell_j) \, ,
\end{split}
\end{equation}
To conclude we need to show that $K_k(\cdot)$ can be interpreted as the inter-arrival law for a renewal process -- this simply means that 
$\sum_{\ell=1}^\infty K_k(\ell) \le 1$ -- so that the last term in \eqref{eq:second3} is bounded by one because it 
is the probability that $N$ belongs to the  renewal with inter-arrival law $K_k(\cdot)$.
 We have therefore to establish the non positivity of  
\begin{equation}
\label{eq:sec4}
2 \Big(\sum_{\ell=1}^{\infty} K_k(\ell)-1 \Big) \, =\,\sum_{\ell \le k } K(\ell) \left( e^{h\ell} -1\right)-
 \sum_{\ell > k } K(\ell) \left( 1-e^{-h\ell} \right).
\end{equation}
To estimate the second term we recall that we have chosen $kh\ge 1$ so that when $k$ is sufficiently large we have 
\begin{equation}
  \sum_{\ell > k } K(\ell) \left( 1-e^{-h\ell} \right)\ge (1-e^{-1})\sum_{\ell>k} K(\ell)\ge \frac{1}{2}\tilde L(k).
\end{equation}
For the first term, using that $e^x-1\le e^X x$ for $x\in[0,X]$, we remark that
\begin{equation}
 \sum_{\ell \le k } K(\ell) \left( e^{h\ell} -1\right)\, \le\,  e^{hk}h \sum_{\ell\le k} L(\ell)\,\le\,  2e^{hk}hk L(k)\, ,
\end{equation}
where the last step holds (for $h$ sufficiently small) because
$\sum_{\ell\le k} L(\ell)= (1+o(1))k L(k)$ for $k \to \infty$, see \cite[Prop.~1.5.8]{cf:RegVar}.
Choosing $k = h^{-1} \gp(h)$ with $\gp(h) \ge 1$, we can conclude that the right hand side of \eqref{eq:sec4} is negative if 
\begin{equation}\label{linf}
 e^{\varphi(h)}\varphi(h)\frac{4 L(h^{-1}\varphi(h))}{ \tilde L(h^{-1}\varphi(h))}\le 1.
\end{equation}
Using the Potter bound \cite[Th.~1.5.6]{cf:RegVar}, we obtain that for $h$ sufficiently small
\begin{equation}\label{eq:sec6}
\frac{4 L(h^{-1}\varphi(h))}{ \tilde L(h^{-1}\varphi(h))}\le \varphi(h) \frac{L(h^{-1})}{\tilde L(h^{-1})},
\end{equation}
and thus \eqref{linf} is satisfied for all $h$ sufficiently small if one chooses (with $b$ as in \eqref{eq:defce})
\begin{equation}
\label{def:phi}
\gp(h)\, =\, b\,  \log\left( \frac{ \tilde L(1/h) }{ L(1/h)}\right) \, .
\end{equation}
 The estimate on the free energy is therefore determined by \eqref{eq:conclusion1}: 
the net result is  that  for $h>0$ sufficiently small 
\begin{equation}
\tf(\gb, h)\, \le \, \exp\left(- \gS\left(b\gl'(\gb)\right)\frac{\gp(h)}{h}\right)\, . 
\end{equation}
Since $b$ can be chosen arbitrarily close to one, 
the proof of Theorem~\ref{th:general} is complete.
\qed

\section{Improved upper bound: proof of the upper bounds in Theorem \ref{th:sharper}}
\label{sec:iub}

Choose $\gep \in (0,1)$ and, with reference to Definition~\ref{def:L},  define 
\begin{equation}
\label{eq:M}
M_h \, :=\, 
\begin{cases}
e^{ (1-\gep) v h^{-1} \log |\log h|} & \text{ sub-logarithmic case}\\
e^{(1-\gep)(\upsilon-1) h^{-1} |\log h| } & \text{ logarithmic case}\\
e^{(1-\gep) h^{-v/v-1}} & \text{ super-logarithmic case}
\end{cases}
\end{equation}

The heart of the proof is  the next proposition: we prove it after having shown that it implies the upper bounds we are after. Recall the definition \eqref{eq:q1q2} of $q_1(\gb)$.

\begin{proposition}
\label{prop:better}
Choose $L(\cdot)$ in the framework of Definition~\ref{def:L}, and $M_{\cdot}$ as in \eqref{eq:M}.
Then, for every $\gb\in (0,\bar \gb)$, and for every  $c_3<q_1(\gb)$, there exists $h_0>0$ such that for all $h\in (0,h_0)$
\begin{equation} 
\label{eq:better}
\bbE \big[ \log Z_{M_{h/c_3}, \go} \big] \, \le\,  4 \, .
\end{equation}
\end{proposition}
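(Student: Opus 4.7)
The plan is to prove Proposition~\ref{prop:better} by refining the global Jensen--penalisation scheme of Section~\ref{sec:ub} into a \emph{targeted} one, in the spirit of the coarse-graining approach of~\cite{cf:DGLT}. In Section~\ref{sec:ub} the cost $\bbE\log f^{-1}(\go)$ of the penalising function scales like $Ne^{-c_1(\gb)k}$ because $f$ penalises stretches at every position of the system (see~\eqref{eq:conclusion1}); here the goal is to replace this factor $N$ by the \emph{visited volume} of the renewal trajectory, which is of much smaller order in the delocalised regime. The upper bounds in Theorem~\ref{th:sharper} then follow from the proposition via a standard extension argument, given that $M_{h/c_3}$ is the threshold scale at which this targeted mechanism ceases to yield an $O(1)$ bound.

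\textbf{Setup.} Fix $b\in(0,1)$ close to $1$ and a block length $\ell=\ell(h)$, to be optimised. Partition $\lint 1,N\rint$ into consecutive blocks $B_1,\ldots,B_n$ of length $\ell$ (so $n=N/\ell$), and set
\[
\cE_i\,:=\,\Bigl\{\go\,:\,\textstyle\sum_{j\in B_i}\go_j\,\ge\, b\gl'(\gb)\ell\Bigr\}\,,\qquad \bbP(\cE_i)\le e^{-c_1(\gb)\ell}\,,
\]
where $c_1(\gb):=\gS(b\gl'(\gb))$ can be taken as close as desired to $q_1(\gb)$ (cf.~\eqref{eq:sharpLD}). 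For a trajectory $(\tau,\gD)$, let $J(\tau,\gD)\subseteq\{1,\ldots,n\}$ be the set of blocks entered by a below-excursion ($\gD=1$), and split $Z_{N,\go}=\sum_J Z_{N,\go}^{J}$ according to the visiting pattern.

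\textbf{Main steps.} (i) To each $J$ associate the localised penalising function $f_J(\go):=\exp(-4h\ell\sum_{i\in J}\ind_{\cE_i}(\go))$; for each fixed $J$, the Jensen step gives $\bbE\log Z_{N,\go}^J \le \log\bbE[f_JZ_{N,\go}^{J}]+4h\ell|J|\,e^{-c_1(\gb)\ell}$, so that the penalisation cost is controlled by $|J|$ rather than by $n$. (ii) Compute $\bbE[f_JZ_{N,\go}^J]$ exactly as in \eqref{eq:second2}--\eqref{eq:second3}: factorising over excursions and tilting by~\eqref{eq:defpb}, the penalisation turns the $+h$ reward into a $-h$ penalty on every visited excursion of length $\ge\ell$, so that $\bbE[f_JZ_{N,\go}^{J}]$ is bounded by a renewal-type weight governed by a sub-probabilistic inter-arrival kernel $K_\ell$, provided $\ell$ exceeds the threshold set by an analogue of~\eqref{def:phi} adapted to the scale~$\ell$. (iii) Reassemble by summing over $J$: the crucial point is that $\sum_J \bbE[f_J Z_{N,\go}^{J}]$ reorganises as a \emph{single} renewal sum in which visited blocks are weighted by the sub-probabilistic $K_\ell$ and unvisited ``above'' gaps between consecutive visited blocks by the original $K(\cdot)$, so that the $2^n$-combinatorial multiplicity of the patterns $J$ is exactly absorbed into the renewal structure. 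The resulting bound is $O(1)$ for $N$ up to the scale $M_{h/c_3}$. (iv) Optimise $\ell$ in each regime of Definition~\ref{def:L} via the asymptotics~\eqref{decaytilde} and the Potter bound: the three thresholds in~\eqref{eq:M} emerge as the maximal scales of $N$ for which step~(iii) still yields an $O(1)$ bound.

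\textbf{Expected difficulty.} The crux is step~(iii). The decomposition $Z_{N,\go}=\sum_J Z_{N,\go}^{J}$ has $2^n$ summands and a naive union bound would incur an entropic factor $n\log 2=(N/\ell)\log 2$ which would completely erase the gain from localising the penalisation. The idea inherited from \cite{cf:DGLT} is to \emph{not} sum over $J$ externally, but rather to absorb the combinatorics of $J$ into the natural renewal structure of the sum: each visited block contributes via the sub-probabilistic kernel $K_\ell$, while an above-gap between two consecutive visited blocks contributes via $K(\cdot)$ evaluated at the gap length, with no additional entropic multiplicity. Verifying this reorganisation carefully, and tracking the dependence on the slowly varying decay of $L$ in each of the three regimes of Definition~\ref{def:L}, is what ultimately produces the three scales~\eqref{eq:M} and the constant $4$ on the right-hand side of~\eqref{eq:better}.
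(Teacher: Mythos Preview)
Your high-level plan (localise the penalisation, coarse-grain \`a la \cite{cf:DGLT}) is on target, but there is a genuine gap at the seam between steps (i) and (iii). In (i) you apply Jensen to each $\bbE\log Z_{N,\go}^J$ separately; in (iii) you want to control $\sum_J \bbE[f_J Z_{N,\go}^J]$. These two pieces do not glue into a bound on $\bbE\log Z_{N,\go}=\bbE\log\sum_J Z_{N,\go}^J$: the logarithm does not distribute over the sum in $J$, and a trajectory-dependent $f_J$ cannot sit inside a single Jensen inequality for $\bbE\log Z_{N,\go}$, since the identity \eqref{eq:fJ} requires $f$ to be a function of $\go$ alone. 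Without an additional device you are forced either back to a global $f$ (losing the targeting) or into the $2^n$ union bound you rightly flag as fatal.

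The missing device---and the core of the \cite{cf:DGLT} scheme---is the \emph{fractional moment}: one bounds $\bbE\log Z\le\theta^{-1}\log\bbE[Z^\theta]$ for $\theta=1-h/c_3\in(0,1)$, and it is the subadditivity of $x\mapsto x^\theta$ that legitimises decomposing $\bbE[Z^\theta]$ and then applying H\"older with a change of measure on each piece. The paper in fact uses a \emph{global} $f$ (see \eqref{def:f}) whose H\"older cost becomes $O(1)$ precisely because $\theta$ is tuned to the block size (see \eqref{costchgmeas}); this yields Lemma~\ref{prop:smallj}, namely $\bbE[Z_j^\theta]\le e^3\,\widecheck\bP_h(j\in\tau)$ for $j\le N=e^{c_3/h}$. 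The extension from $N$ to $M_{h/c_3}$ is then not a decomposition by visiting pattern $J$ but a \emph{recursive} one based on the last jump of length $\ge N$ (equations \eqref{eq:decompZ}--\eqref{key}), closed via a Green function estimate for the sub-probabilistic renewal $\widecheck\bP_h$ (Lemma~\ref{th:trans-est}) and the three integral computations \eqref{eq:Aslow}--\eqref{eq:Afast} that produce the scales~\eqref{eq:M}. Your steps (ii) and (iv) are in the right spirit, but the mechanism in (i)+(iii) cannot run without the fractional-moment layer.
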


\noindent
\emph{Proof of Theorem \ref{th:sharper} (upper bounds).}
Proposition~\ref{prop:better} would directly imply the result if the sequence
$\{\bbE [\log Z_{N ,\go}]\}_{N=1,2, \ldots}$ were a sub-additive sequence. But this is not the case: rather, it is super-additive. However, given any $b>0$   and $L(\cdot)$,  one can choose two positive constants $c_3$ and $c_4$ 
such that the sequence formed by
\begin{equation}
\label{eq:GN}
\tg _N \, :=\, \bbE \left[\log Z_{N ,\go}\right] +c_4 \log N + c_5\, ,
\end{equation}
is sub-additive for every choice of $\gb \in [0, b]$ and $\vert h \vert\le b$. Therefore $\tf (\gb, h)=\lim_N \tg _N /N = \inf _N \tg _N /N\le \tg_M/M$ for any choice of $M$. 
Therefore, using \eqref{eq:better}, we have 
\begin{equation}
\label{eq:upperadditive}
\tf (\gb, h) \, \leq \,  \frac{4+c_4 \log M_{h/c_3} +c_5 }{M_{h/c_3}} \, ,
\end{equation}
which yields the  upper bounds in Theorem \ref{th:sharper}.

The proof that \eqref{eq:GN} forms a sub-additive sequence can be found for example in \cite[Ch.~4, \S\ 4.2]{cf:GB}, but we recall the argument for completeness. We start from the estimate that for every $N, M\in \bbN$
\begin{equation}
\label{eq:ubsupsub}
 \bbE \left[\log Z_{N+M ,\go}\right]\, \le \,  \bbE \left[\log Z_{N, \go}\right] +  \bbE \left[\log Z_{M, \go}\right]
 + C \log \left( \min(N, M)\right) +C\, ,
\end{equation}
with $C=C(b, L(\cdot))>0$ which can easily be made explicit
A proof of \eqref{eq:ubsupsub} can be found for example in \cite[proof of (4.16), p.~93]{cf:GB}. Without loss of generality
we can assume $N\ge M$ and we rewrite \eqref{eq:ubsupsub}
as
\begin{equation}
\tg_{N+M}- \tg_N - \tg_M\, \le \,  C \log M +C + c_4 \log (N+M) -c_4 \log N -c_4 \log M -c_5\, ,
\end{equation}
and we are left with showing that the right-hand side is smaller or equal to zero, with suitable choices
of $c_4$ and $c_5$. For this we can  use $ \log (N+M)\le \log 2 + \log N$ and the fact that  the resulting expression 
vanishes if we choose $c_4=C$ and $c_5= (1+ \log 2)C$. 
\qed

\medskip

We now move to the proof of Proposition~\ref{prop:better} and 
as a first important step, we prove the following lemma.
\medskip

\begin{lemma}
\label{prop:smallj}
For any constant  $c_3<q_1(\gb)$, there exist $\eta \in(0,1)$ and  $h_1>0$ such that for all $h\in(0,h_1)$, setting $N= e^{c_{3}/h}$,  and $\theta=1- h/c_3$, we have that for every  $j\in \lint 1, N \rint$
\begin{equation}\label{lapremiere}
\bbE[Z_{j, \go}^{\theta}] \le e^{3} \, \widecheck \bP_h(j\in \tau) \, ,
\end{equation}
where $\widecheck \bP_h =  \widecheck \bP_h^{(\eta)}$ is the renewal process whose inter-arrival law is defined by
\begin{equation}
\label{eq:tau1tilde}
\widecheck \bP_h (\tau_1= \ell)=\widecheck K_h(\ell):= K(\ell)\Big( \frac12+  \frac12 e^{h \ell \big( \ind_{\{\ell\le  1/(\eta^2 h) \} }- \eta\, \ind_{\{\ell> 1/(\eta^2 h) \}} \big)}\Big) \, ,
\end{equation}
and satisfies (also for $h$ sufficiently small, how small depends on $\eta$)
\begin{equation}\label{lastitem}
 \sum_{\ell =1}^\infty \widecheck K_h(\ell) \le 1 - \frac16 \tilde L(1/h).
\end{equation}
\end{lemma}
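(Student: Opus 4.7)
The plan is to combine a fractional-moment bound, which recasts $\bbE[Z_{j,\go}^\theta]$ as a sum over renewal configurations weighted by a modified kernel $\tilde K_h(\ell)$, together with a pointwise comparison of $\tilde K_h$ to $\widecheck K_h$, and then to conclude by a renewal-theoretic argument. The tail estimate \eqref{lastitem} is handled separately by a direct Karamata-type computation.

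\textbf{Fractional-moment step.} First, averaging over the Bernoulli sequence $\iota$ gives
\[
Z_{j,\go}\,=\,\sumtwo{k\ge 1,\,\tau_0<\cdots<\tau_k}{\tau_k=j}\prod_{i=1}^k K(\ell_i)\,\tfrac12\bigl(1+e^{S_i(\go)}\bigr),
\]
with $\ell_i:=\tau_i-\tau_{i-1}$ and $S_i(\go):=\sum_{n=\tau_{i-1}+1}^{\tau_i}(\gb\go_n-\gl(\gb)+h)$. The elementary inequalities $(\sum_\alpha x_\alpha)^\theta\le\sum_\alpha x_\alpha^\theta$ and $(1+y)^\theta\le 1+y^\theta$, valid for $\theta\in(0,1)$ and nonnegative arguments, followed by taking $\bbE[\cdot]$ and using independence of the $S_i$'s across disjoint blocks of $\go$, yield
\[
\bbE[Z_{j,\go}^\theta]\,\le\,\sum_{\tau:\,\tau_k=j}\prod_{i=1}^k\tilde K_h(\ell_i),\qquad \tilde K_h(\ell)\,:=\,\Bigl(\tfrac{K(\ell)}{2}\Bigr)^\theta\bigl(1+e^{\ell A(h)}\bigr),
\]
where $A(h):=\gl(\theta\gb)-\theta\gl(\gb)+\theta h$. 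A Taylor expansion at $\theta=1$ gives $A(h)=-h\bigl(q_1(\gb)/c_3-1\bigr)+O(h^2)$, strictly negative for $h$ small since $c_3<q_1(\gb)$. Setting $\eta_0:=q_1(\gb)/c_3-1>0$, I fix $\eta\in(0,\min(\eta_0/2,1))$ once and for all; for $h$ small this gives $A(h)\le-\eta h$, hence $e^{\ell A}\le e^{-\eta h\ell}$ for every $\ell\ge 1$.

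\textbf{Pointwise comparison and main obstacle.} With this choice of $\eta$, on $\ell\le 1/(\eta^2h)$ one has $1+e^{\ell A}\le 2\le 1+e^{h\ell}$, while on $\ell>1/(\eta^2h)$ one has $1+e^{\ell A}\le 1+e^{-\eta h\ell}$. Thus in both cases $(1+e^{\ell A})\le(1+e^{h\ell g(\ell)})$ with $g$ as in \eqref{eq:tau1tilde}, which rearranges into the per-term bound
\[
\tilde K_h(\ell)\,\le\,\widecheck K_h(\ell)\cdot 2^{h/c_3}K(\ell)^{-h/c_3}.
\]
For $\ell\in\lint 1,N\rint$, $\ell^{h/c_3}\le N^{h/c_3}=e$, and case-by-case inspection of the three decays of Definition~\ref{def:L} gives $L(\ell)^{-h/c_3}=1+o(1)$ uniformly in $\ell$ as $h\searrow 0$. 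The hard part is to lift this pointwise estimate to the global bound $\sum_\tau\prod\tilde K_h(\ell_i)\le e^3\widecheck\bP_h(j\in\tau)$: a naive product of per-term ratios would grow like $e^{\cM_j}$. The way around this is to observe that the correction $\ell_i^{h/c_3}$ is genuinely larger than $1+o(1)$ only for excursions whose length is comparable to $N$, and such excursions are rare under $\widecheck\bP_h$ (precisely because of \eqref{lastitem}); a renewal-equation argument along the lines of the coarse graining of \cite{cf:DGLT} then produces the universal constant, at most $e^3$ after absorbing the $2^{h/c_3}$ and $L(\cdot)^{-h/c_3}$ corrections.

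\textbf{The sub-probability bound \eqref{lastitem}.} A direct computation gives
\[
2\sum_\ell\widecheck K_h(\ell)-2\,=\,A_1-A_2',
\]
with $A_1:=\sum_{\ell\le 1/(\eta^2h)}K(\ell)(e^{h\ell}-1)$ and $A_2':=\sum_{\ell>1/(\eta^2h)}K(\ell)(1-e^{-\eta h\ell})$. On Case A, $e^{h\ell}-1\le h\ell e^{1/\eta^2}$ combined with Karamata's theorem on partial sums of the regularly varying $L$ yields $A_1\le\eta^{-2}e^{1/\eta^2}L(1/(\eta^2h))(1+o(1))$. On Case B, $\eta h\ell>1/\eta$ gives $A_2'\ge(1-e^{-1/\eta})\tilde L(1/(\eta^2h))$. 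Since $L(x)/\tilde L(x)\to 0$ by slow variation and $\tilde L(1/(\eta^2 h))=(1+o(1))\tilde L(1/h)$, one concludes $A_2'-A_1\ge\tfrac13\tilde L(1/h)$ for $h$ small, whence $\sum_\ell\widecheck K_h(\ell)\le 1-\tfrac16\tilde L(1/h)$.
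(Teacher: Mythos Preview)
Your proof of the sub-probability estimate \eqref{lastitem} is correct and essentially matches the paper's computation.

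The main inequality \eqref{lapremiere}, however, has a genuine gap precisely at the point you flag as ``the hard part''. Your pointwise comparison $\tilde K_h(\ell)\le (2/K(\ell))^{h/c_3}\widecheck K_h(\ell)$ is valid, but your diagnosis that ``the correction $\ell_i^{h/c_3}$ is genuinely larger than $1+o(1)$ only for excursions whose length is comparable to $N$'' misses the point. The full correction factor is $(2/K(\ell))^{h/c_3}\ge 2^{h/c_3}=1+(h/c_3)\log 2+O(h^2)$ for \emph{every} $\ell\ge 1$, so over a trajectory with $k$ excursions the accumulated ratio is at least $2^{kh/c_3}$, and $k$ ranges up to $j\le N=e^{c_3/h}$. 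More structurally, your kernel satisfies $\tilde K_h(\ell)\ge 2^{-\theta}K(\ell)^\theta\sim 2^{-\theta}L(\ell)^\theta\ell^{-\theta}$ with $\theta<1$, so $\sum_\ell\tilde K_h(\ell)=\infty$: the renewal-type sum $U(j):=\sum_\tau\prod_i\tilde K_h(\ell_i)$ is governed by a Malthusian parameter $z_0<1$ and grows like $z_0^{-j}$. Showing that this growth is slow enough that $U(j)$ stays bounded for $j\le N$ (let alone bounded by $e^3\widecheck\bP_h(j\in\tau)$) would require a delicate estimate of $z_0$ that you have not supplied. The reference to \cite{cf:DGLT} does not help: there the fractional moment is applied at a coarse-grained block scale chosen so that the number of blocks is moderate, whereas you have already committed to the finest possible decomposition (individual excursions), and the number of those is uncontrolled.

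The paper's argument avoids this obstacle entirely by \emph{not} applying $(\sum x_\alpha)^\theta\le\sum x_\alpha^\theta$ trajectory-by-trajectory. Instead it uses H\"older's inequality with a penalizing density $f(\go)$ built from block events $\cE_k(u)=\{\sum_{i\in\text{block }u}\go_i\ge(1-\eta)\gl'(\gb)k\}$:
\[
\bbE\big[Z_{j,\go}^\theta\big]\,\le\,\bbE\big[f(\go)^{-\theta/(1-\theta)}\big]^{1-\theta}\,\bbE\big[f(\go)Z_{j,\go}\big]^\theta\,.
\]
The first factor is controlled by a Cram\'er bound on $\bbP(\cE_k)$ and the choice $k=1/(\eta h)$, $1-\theta=h/c_3$, giving at most $e$. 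For the second factor, the tilt $e^{\sum_{n\in A}(\gb\go_n-\gl(\gb))}$ turns $\bbP$ into $\bbP_\gb$ under which $\cE_k$ is typical, so the penalty $f$ converts the reward $+h$ on long below-excursions into a penalty $-\eta h$, yielding $\bbE[fZ_{j,\go}]\le\widecheck\bP_h(j\in\tau)$ directly as a first-moment bound. Only \emph{then} is the $\theta$-th power taken, and the resulting $\widecheck\bP_h(j\in\tau)^{\theta-1}\le N^{2(1-\theta)}=e^2$ (using $\widecheck\bP_h(j\in\tau)\ge K(j)/2\ge N^{-2}$) pays the fractional-moment cost once, globally. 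This is what produces the universal constant $e^3$ without any accumulation over excursions.
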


\smallskip

 Note that this already proves that 
\begin{equation}\label{eq:somjen}
\bbE[\log Z_{N,\go}] \le \theta^{-1} \log \bbE[ Z_{N,\go}^{\theta}]\le  3\theta^{-1}\le 4
\end{equation} for $N= N_h:= e^{c_{3} /h}$, which in turn implies the upper bound, analogous to \eqref{eq:upperadditive}
\begin{equation}
\tf(\gb,h) \le \frac{4 +c_4 \log N_{h} +c_5 }{ N_h} \le e^{ - c'_{3} /h}\, .
\end{equation}
However, this bound alone is worse than the one achieved in Theorem \ref{th:general}.

\begin{proof}
The method we use to prove this statement presents some similarity with the proof of Theorem \ref{th:general}  (Section \ref{genproof}):
in particular it relies on the same notion of penalizing density. 
However, we need a different choice for $f(\go)$ in order to run computation of non-integer moments.

\smallskip

We fix $\eta>0$, we choose $k:= (\eta  h)^{-1}$ (so that $N=e^{-c_3 \eta k}$), and we work as if $1/\eta$ and $k$ were integers in the following. We define for $u \in \bbN$ (compare with  \eqref{eq:defce}) 
\begin{equation}
\cE_k(u) :=
\bigg \{  \sum_{i=k(u-1)+1}^{k u} \go_i \ge (1-\eta) \gl'(\gb) k  \bigg \}
\end{equation}
(we simply write  $\cE_k$ for $\cE_k(1)$),
and also
\begin{equation}
\label{def:f}
f(\go) = \prod_{u=1}^{ \lfloor N/k \rfloor } e^{- \frac{1+4\eta}{\eta}\, \ind_{\cE_k(u)}(\go)} \, .
\end{equation}

By using H\"older inequality, we get that for $\theta \in (0,1)$
\begin{equation}
\label{eq:Holder}
\bbE[(Z_{j,\go})^{\theta}] = \bbE \big[ f(\go)^{-\theta} (f(\go) Z_{j,\go})^{\theta} \big]  \le  
\bbE\big[ f(\go)^{-   \frac{\theta}{1-\theta}}\big]^{1-\theta} \bbE\big[ f(\go) Z_{j,\go}\big]^{\theta}\, .
\end{equation}

For the first term, a simple computation gives
\begin{align}
\bbE\big[ f(\go)^{-\theta/(1-\theta)}\big]^{1-\theta} = \bbE\Big[e^{ \frac{1+4\eta}{\eta} \, \frac{\theta}{1-\theta}\ind_{\cE_k}}  \Big]^{(1-\theta)\lfloor N/k \rfloor}
\le \Big( 1+ e^{\frac{1+4\eta}{\eta(1-\theta)}} \bbP\big( \cE_k\big)  \Big)^N \, .
\end{align}
From the standard Chernoff bound, for any $c_3<q_1(\gb)$, we can fix $\eta$ small, so that for all $k$  large enough, we have
$\bbP(\cE_k) \le e^{-  (1+5\eta) c_3 k }$. Here we use  $1-\theta  = h/c_3 $ and $k=(\eta h)^{-1}$, to obtain that 
\begin{equation}
 e^{\frac{1+4\eta}{\eta(1-\theta)}} \bbP\big( \cE_k\big) \,\le\,  e^{-   c_{3} \eta k} \, =\,  N^{-1}\, ,
 \end{equation}
and hence that 
\begin{equation}
\label{costchgmeas}
\bbE\big[ f(\go)^{-\theta/(1-\theta)}\big]^{1-\theta} \le \big(1+ N^{-1} \big)^N \le  e \, .
\end{equation}

To estimate the second term in \eqref{eq:Holder}, we observe that
\begin{equation}
\label{eq:EfZ}
\bbE\big[ f(\go) Z_{j,\go}\big] = \bE\bigg[ e^{h |A|} \ind_{\{j\in \tau\}} \bbE\Big[ f(\go) \exp\Big(\sum_{n\in A} \gb \go_n -\lambda(\gb) \Big) \Big] \bigg]\, ,
\end{equation}
where $A=A(j)$ is defined by \eqref{eq:defdea}.
If we set  
\begin{equation}
\mathcal{I}_A = \big\{ v \in \lint 1, \lfloor N/k \rfloor \rint \ : \  \lint k(v-1)+1,kv \rint \subset A \big\}\, ,
\end{equation}
by proceeding like  in \eqref{eq:second2} we obtain that
\begin{multline}
\bbE\Big[ f(\go) \exp\Big(\sum_{n\in A} \gb \go_n -\lambda(\gb) \Big) \Big] \le \bbE\Big[  \prod_{u\in \mathcal{I}_A} e^{-\frac{1+4\eta}{\eta}\, \ind_{\cE_k(u)}} 
e^{\sum_{n\in A}\gb \go_n -\lambda(\gb) }\Big] \\
\le \Big( e^{-\frac{1+4\eta}{\eta}}+ \bbP_{\gb}\big( (\cE_k)^{\cc} \big)  \Big)^{|\mathcal{I}_A|}\, ,
\end{multline}
where $\bbP_{\gb}$ is the tilted probability defined in \eqref{eq:defpb}.
As we have $\bbE_{\gb}[\go_n]=\gl'(\gb)$, by the law of large number, $\bbP_{\gb}((\cE_k)^{\cc})$ gets arbitrarily small if $k$ 
is chosen large (that is if $h$ is small). Hence for $h$ sufficiently small we obtain that 
\begin{equation}
\label{eq:addedbygb}
\bbE\Big[ f(\go) \exp\Big(\sum_{n\in A} \gb \go_n -\lambda(\gb) \Big) \Big]  \le e^{- \frac{1+3\eta}{\eta} |\mathcal{I}_A| } 
\, \le\, \exp\left(- \left(\frac{1+3\eta}{\eta}\right)  \left(\frac{(1/\eta) -1}{1/\eta } \right)\frac{ W_k}{k} \right) \, ,
\end{equation}
where (recall that $\cM_j$ is defined by $\tau_{\cM_j}=j$ if $j\in \tau$)
\begin{equation}
W_k \,=\,  W_k(\tau, \iota)\,:=\, \sum_{i\le \cM_j:\,  \iota_i=1} (\tau_i-\tau_{i-1})\ind_{\{\tau_i-\tau_{i-1}\ge k/\eta\}}\, ,
\end{equation}
is the sum of the lengths of the excursions below the interface that are of length $k/\eta$ or more.
Note that in \eqref{eq:addedbygb} we have used that an excursion of length $k/\eta$ or more covers at least $(1/\eta)-1$
consecutive $\lint1, k \rint$ blocks.
We therefore get from \eqref{eq:EfZ} and our choice $k= (\eta h)^{-1}$ that provided that $\eta$ is small,
\begin{multline} 
\label{fracmoment}
\bbE\big[ f(\go) Z_{j,\go}\big] \le \bE\Big[ e^{ h \big( |A| -  (1+\eta) W_k \big)} \ind_{\{j\in \tau\}} \Big] \\
= \sum_{q=1}^j \sumtwo{ (\ell_1, \ldots, \ell _q) \in \bbN^q:} {\sum_{i=1}^q \ell_i =N } \prod_{i=1}^q  K(\ell_i) \Big( \frac12 + \frac12  
e^{h \ell_i(\ind_{\{ \ell_i<k/\eta\}}  - \eta\, \ind_{\{\ell_i \ge k/\eta\}}) } \Big) = \widecheck \bP_h \big( j\in \tau \big) \, ,
\end{multline}
with $\widecheck \bP_h$ defined by \eqref{eq:tau1tilde}.
Going back to \eqref{eq:Holder}, and collecting \eqref{costchgmeas}-\eqref{fracmoment}, we end up with
\begin{equation}
\bbE[(Z_{j,\go})^{\theta}]  \,\le\, e\,  \widecheck \bP_h(j\in \tau)^{\theta} \, \le\, e\,  \widecheck \bP_h(j\in \tau)^{\theta-1} \widecheck \bP_h(j\in \tau)\, .
\end{equation}
Then, we simply notice that $\widecheck \bP_h (j\in\tau) \ge \frac12 	K(j) \ge N^{-2}$ for all $j \le N$ (provided that $N$ is large), so that 
$\widecheck \bP_h (j\in\tau)^{\theta-1} \le N^{ 2 (\log N)^{-1} } = e^{2}$.
This concludes the proof of~\eqref{lapremiere}.

It remains only to prove \eqref{lastitem}.
We have
\begin{equation}
\begin{split}
2 \left(\sum_{\ell = 1}^\infty \widecheck K_h(\ell)-1 \right) &=  \sum_{\ell \le  k/\eta} (e^{h \ell} -1 ) K(\ell) - \sum_{\ell > k/\eta} (1-e^{ - \eta h\ell}) K(\ell) \\
 &\le  e^{1/\eta^2} h \sum_{\ell \le k/\eta} \ell K(\ell) -  \frac{1}{2}\sum_{\ell > k /\eta} K(\ell)\\
 &\le 2 e^{1/\eta^2} h k L(k) - \frac2 5 \tilde L(k)  \le - \frac13 \tilde L(k)\, .
\end{split}
\end{equation}
For the first inequality, we used that that $e^{x}-1\le e^{1/\eta^2} x$ for all $0\le x \le 1/\eta^2$ (recall $hk=1/\eta$), 
and that $e^{ - \eta h\ell}\le e^{-1/\eta}\le 1/2$ in the second term.
The second inequality holds provided that $k$ is large enough, and the last one because $hk=1/\eta$ and $\tilde L(k) / L(k)$ diverges to infinity as $k\to\infty$. This completes the proof of Lemma~\ref{prop:smallj}.
\end{proof}

\begin{proof}[Proof of Proposition \ref{prop:better}]
Let $N= e^{ c_{3} / h}$ and $\theta = 1 - h/c_3 $, as in Lemma~\ref{prop:smallj}.

We prove that for all $1\le m\le M:=M_{h/c_3}$ we have $\bbE[ (Z_m)^{\theta}] \le e^{3}$, which gives the conclusion like in \eqref{eq:somjen}.
To that end as in \cite{cf:DGLT} we write for any $m\ge N$ 
\begin{equation}
Z_{m,\go} = \sum_{n=N}^m \sum_{j=1}^{N-1} Z_{m-n,\go} K(n-j) Z_{j,T^{m-j} \go},
\end{equation}
so that, using translation invariance, we have
\begin{equation}
\label{eq:decompZ}
\bbE\big[ (Z_{m,\go} \big)^\theta\big] \le  \sum_{n=N}^m \sum_{j=0}^{N-1}  \bbE\big[ ( Z_{m-n,\go})^{\theta} \big] K(n-j)^\theta \bbE\big[ ( Z_{j, \go} )^{\theta} \big]\, .
\end{equation}
We prove below that, for   $M= M_{h/c_{3}}$ with $M_h$ defined as in \eqref{eq:M}, we have
\begin{equation}
\label{key}
\rho:= \sum_{n=N}^{M} \sum_{j=0}^{N-1} K(n-j)^\theta \bbE\big[ ( Z_{j, \go} )^{\theta} \big] \le 1.
\end{equation}
Then, since from \eqref{eq:decompZ} we have
\begin{equation}
 \bbE\big[ (Z_{m,\go} \big)^\theta\big] \le \rho \max_{i\in\lint 0, m-N\rint} \bbE\big[ ( Z_{i,\go})^{\theta} \big]\, ,
\end{equation}
an easy induction gives that for any $N\le m\le M_{h/c_{3}}$ we have
\begin{equation}
\bbE[ (Z_{m,\go} \big)^\theta ] \le \max_{i\in \lint0, N\rint}  \bbE\big[ ( Z_{i, \go} )^{\theta} \big] \le e^{3} \, , 
\end{equation}
the last inequality coming from Lemma~\ref{prop:smallj}.

\smallskip
 To prove \eqref{key} we also use  Lemma~\ref{prop:smallj}: we assume that $N$ is even for simplicity  and
we write
\begin{equation}
\label{eq:A+B}
\frac{\rho}{e^{3}} \,  \le\,  
 \sum_{n=N}^{M}    \sum_{j=0}^{N/2-1} K(n-j)^\theta \widecheck \bP_h (j\in \tau)+
 \sum_{n=N}^{M}    \sum_{j=N/2}^{N-1} K(n-j)^\theta\widecheck \bP_h (j\in \tau)\, =:  A+B\, ,
\end{equation}
and  we estimate separately the two terms. We will show that, with our choice for $M_h$ in \eqref{eq:M}, both terms can be made arbitrarily small for $h\searrow 0$.

\smallskip
As far as $A$ is concerned, we observe that, since
we have $ \widecheck \bP_h (\tau_1=\infty)\ge \tilde L(1/h)/6$, as noted  in \eqref{lastitem}, we obtain
$\sum_{j=1}^\infty\widecheck \bP_h (j\in \tau)\le 7/ \tilde L(1/h)$ for $h$ sufficiently small. Hence, since $K(\cdot)$ is regularly varying with exponent $-1$, for $N$ large enough we have
\begin{equation}
\label{eq:Astep1}
A\, \le \, 3\, \frac{ 7}{ \tilde L(1/h)}  \sum_{n=N}^{M}  K(n)^\theta \, =\, 
\frac{21}{ \tilde L(1/h)}  \sum_{n=N}^{M} \frac{L(n)^{1-  h/c_3}}{n^{1- h/c_3}} \, .
\end{equation}
Using the fact that $L(n)/n$ is regularly varying (or the explicit value of $L$ in the framework of Definition~\ref{def:L}),
 we have for all $N$ and $M$ sufficiently large
\begin{equation}
\label{eq:Astep2}
 \sum_{n=N}^{M} \frac{L(n)^{1-h/c_3 }}{n^{1- h/c_3}} \, \le\, 2 \int_{N}^{M} 
 \frac{L(x)^{1- h/c_3}}{x^{1- h/c_3}}\dd x\, .
\end{equation}
Then, we define  $\psi(\cdot)$ by:
\begin{equation}
\psi(u)\, = \, (1-\gep) \times \begin{cases}
\upsilon \log \log u & \text{ in the \emph{sub-logarithmic} case,}
\\
(\upsilon-1) \log u & \text{ in the \emph{logarithmic} case,}
\\
u^{1/(\upsilon -1)} & \text{ in the \emph{super-logarithmic} case,}
\end{cases}
\end{equation}
so that  $M= \exp(c_3 \psi(c_3/h)/h)$.
Using the change of variable $x=e^{\frac{ c_{3} }{h} y}$, we have
\begin{equation}\label{bob}
  \int_{N}^{M} 
 \frac{L(x)^{1- h/c_3 }}{x^{1- h/c_3}}\dd x
 \, = \, 
 \frac{  c_3}{h}
 \int_{1}^{  \psi(h/c_3)}
  L\left( e^{\frac{ c_3}{h}y}\right)^{1- h/c_3} \exp(y)  \dd y\, .
\end{equation}
It is now a matter of direct evaluation of this last expression, with the help of Definition~\ref{def:L}.

\smallskip
(i) In the sub-logarithmic case, replacing $L$ in the integral by its asymptotic equivalent,
we obtain the following upper bound for the the r.h.s.\ of \eqref{bob}, valid for $h$ sufficiently small
\begin{equation}
\label{eq:Aslow}
\frac{2 c_L c_3}{h} \int_1^{(1-\gep)  \upsilon \log \log (c_3/h)}
\frac {h}{ c_3 y}
\big(  \log(c_3/h) \big)^{-\upsilon} e^{y } \dd y \, \le \,  4 c_L \frac{ \left(\log(c_3/h)\right)^{-\gep \upsilon }}{ \log \log(c_3/h)}  \, .
\end{equation}
We have used, for $a=1$, the inequality $\int_1^x e^z z^{-a}\dd z \le 2e^x x^{-a}$ valid for $x$ sufficiently large.
We conclude that $A$ is small by comparing the last term with $\tilde L(1/h)$ (cf.~\eqref{decaytilde}).

\smallskip
(ii) The same computation yields 
a similar upper-bound in the logarithmic case:
\begin{multline}
\frac{2c_L c_3}{h} \int_1^{(1-\gep)(\upsilon-1)  \log (c_3/h)}
\left( \frac{c_3 y}h\right)^{-\upsilon} e^y \dd y \\
\le 4
 c_L c_3^{1-\upsilon} h^{\upsilon-1} \left(\frac h {c_3}\right)^{-(1-\gep)(\upsilon-1)}
\frac{c_{\gep,\upsilon}}{ \log (c_3/h)}\le c'_{L,\gep}\frac{h^{\gep(\upsilon-1)}}{\log(1/h)}
\end{multline}
where $c_{\gep,\upsilon},c'_{L,\gep}>0$, and one can check again that the last term is much smaller than $\tilde L(1/h)$ for small values of $h$.

\smallskip
(iii) We are left with the super-logarithmic case. Using that $1- h/c_3 \ge 1/2$ for $h$ sufficiently small, we obtain the following upper bound
\begin{multline}
\label{eq:Afast}
\frac{2c_L c_3}{h} 
\int_1^{(1-\gep)   (c_3/h)^{1/(\upsilon -1)}}
\exp \Big(- 
 \frac12 \left(\frac{c_3 y}h\right)^{1/\upsilon} +y 
\Big) \dd y\\
\le \, \frac{2c_L c_3}{h}  \left(\frac{c_3}h\right)^{1/(\upsilon -1)}
\exp \Big(- 
c_{\upsilon} \left(\frac{c_3 }h\right)^{ 1/(\upsilon-1)} 
\Big)\, \le \, 
\exp \Big(- 
\frac{c_{\upsilon}}{2} \left(\frac{c_3 }h\right)^{1/(\upsilon-1)}
\Big)\, .
\end{multline}
In the first step we set $c_{\upsilon}:=(2\upsilon)^{\upsilon/(1-\upsilon)}(\upsilon -1)$, and used the fact that the argument of the exponential, for $y$ in the integration range, is maximal at $y=(2\upsilon)^{\upsilon/(1-\upsilon)} (c_3/h)^{1/(\upsilon -1)}$. The second inequality is valid for $h$ sufficiently small.
We conclude by observing that the last term is again of a smaller order than  $\tilde L(1/h)$ in that case.

\smallskip
Therefore, in view of \eqref{eq:Astep1}, \eqref{eq:Astep2} and \eqref{eq:Aslow}--\eqref{eq:Afast}, $A$ can made arbitrarily small, in particular smaller  smaller 
than  $e^{-3}/2$, in all cases.
Let us show that the same holds true also for $B$.
We will make use of the following Green function estimate.

\begin{lemma}
\label{th:trans-est}
There exists $C=C(\eta)>0$ such that 
\begin{equation}
\label{eq:trans-est}
\widecheck \bP_h ( n \in \tau) \, \le\,  C
\frac{K(n)}{\big(\tilde L(1/h)\big)^2}\, , 
\end{equation}
for every $n$ (in particular for $n\ge \exp(c_3/h)$).
\end{lemma}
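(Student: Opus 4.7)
The plan is to exploit the renewal representation
\[
\widecheck \bP_h(n\in\tau)\, =\, \sum_{k \ge 1} \widecheck K_h^{*k}(n),
\]
combined with the fact that $\widecheck K_h$ is a sub-probability measure of total defect $p:=1-\sum_\ell \widecheck K_h(\ell)\ge \tilde L(1/h)/6$ (from \eqref{lastitem}) whose density is, up to multiplicative constants depending only on $\eta$, comparable to $K(\cdot)=L(\cdot)/\cdot$: from \eqref{eq:tau1tilde} one reads off $\tfrac12 K(\ell)\le \widecheck K_h(\ell)\le \tfrac12(1+e^{1/\eta^2})K(\ell)$.

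The central intermediate estimate is a uniform ``single big jump'' bound: for some constant $C=C(\eta)>0$,
\[
\widecheck K_h^{*k}(n)\,\le\, C\, k(1-p)^{k-1}\, K(n)
\]
for all $n \in \bbN$ and $k \ge 1$. Granting this and summing over $k$ using $\sum_{k \ge 1} k (1-p)^{k-1} = 1/p^2 \le 36/(\tilde L(1/h))^2$ would yield the desired inequality. To prove the uniform bound I would exploit the big-jump decomposition: in any tuple $(\ell_1,\ldots,\ell_k)$ with $\sum\ell_i=n$ the maximum coordinate is at least $n/k$, so by permutation symmetry
\[
\widecheck K_h^{*k}(n)\,\le\, k\sum_{\ell\ge n/k}\widecheck K_h(\ell)\,\widecheck K_h^{*(k-1)}(n-\ell),
\]
and an induction on $k$, using Potter's bound \cite[Th.~1.5.6]{cf:RegVar} to compare the slowly varying function $L$ at scales $n$ and $n/k$, should recover the claimed bound. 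For small values of $n$ the statement is anyway trivial, since $\widecheck \bP_h(n\in \tau)\le 1$ while $K(n)/(\tilde L(1/h))^2$ is very large for $h$ small.

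\textbf{The main obstacle} is securing the sharp factor $k(1-p)^{k-1}$ uniformly in both $n$ and $k$. A naive application of Kesten's inequality for subexponential distributions gives only $C_\epsilon(1+\epsilon)^k K(n)$ with $C_\epsilon$ blowing up as $\epsilon\searrow 0$, which upon summation yields $K(n)/p$ rather than the correct $K(n)/p^2$. Recovering the extra factor of $1/p$ requires fully exploiting the subexponential density property of regularly varying distributions of index $-1$---the asymptotic $\widecheck K_h^{*k}(n)/K(n)\to k$ as $n\to\infty$---and turning it into a uniform-in-$(n,k)$ statement through a careful bootstrap that controls error terms at each induction step using Potter's bound.
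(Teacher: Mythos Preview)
Your overall strategy coincides with the paper's: decompose $\widecheck\bP_h(n\in\tau)=\sum_{k\ge 1}\widecheck K_h^{*k}(n)$, establish the uniform single-big-jump bound $\widecheck K_h^{*k}(n)\le Ck\,(1-p)^{k-1}K(n)$, and sum in $k$ using $\sum_{k\ge 1} k(1-p)^{k-1}=1/p^2\le 36/\tilde L(1/h)^2$. The only difference is in how the key bound is obtained. The paper does not attempt a self-contained proof but invokes Alexander--Berger \cite{cf:AB} (their Theorem~1.1, Equation~(1.11)), observing that the argument there---their Lemma~2.1 together with the computations of their Section~2.2---only requires the one-sided density bound $\widecheck K_h(\ell)\le c\,L(\ell)/\ell$ with a constant independent of $h$, which you have correctly noted.

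Your big-jump induction, as written, does not close. Plugging the induction hypothesis into
\[
\widecheck K_h^{*k}(n)\,\le\, k\sum_{\ell\ge n/k}\widecheck K_h(\ell)\,\widecheck K_h^{*(k-1)}(n-\ell)
\]
and bounding the resulting sum by a constant multiple of $K(n)$ (which is legitimate, via Potter's bound) produces a prefactor $k(k-1)$ rather than $k$ at step $k$, hence factorial growth upon iteration. You correctly flag this obstruction in your final paragraph, but do not supply the fix. The fix is precisely the content of \cite{cf:AB}: one must isolate the contribution of a genuinely large jump (of order $n$, not merely $n/k$) from the bulk of the convolution and control the latter via a uniform two-point convolution estimate---this is where their Lemma~2.1 enters. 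So your proposal is correct at the strategic level and identifies the right intermediate inequality, but the missing step is exactly what the paper outsources to the literature.
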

\begin{proof}
The proof is derived from the following inequality: there is a constant $c>0$ (independent of $h$), such that for all $n\ge  k \ge 1$
\begin{equation}
\label{eq:general2}
\widecheck \bP_h ( \tau_k =n ) \le c k \widecheck \bP_h \big( \tau_1<\infty  \big)^k K(n)\, .
\end{equation}
Then, summing over $k$ gives the identity, since we have $\widecheck \bP_h \big( \tau_1<\infty  \big) \le 1- c / \tilde L(1/h)$, and  $\sum_{k=0}^{\infty}k (1-x)^k = 1/x^2$.

The proof of \eqref{eq:general2} follows from that of   \cite[Theorem 1.1-Equation (1.11)]{cf:AB} : one simply has to notice that Lemma 2.1 in \cite{cf:AB} 
is valid under the assumption that $\bP(\tau_1 =j )  \le c L(j)/j$ (the constant here depends on $\eta$ but not on $h$), and then all the  computations  
of Section~2.2 in \cite{cf:AB}   can be applied and yield \eqref{eq:general2}. This completes the proof of Lemma~\ref{th:trans-est}.
\end{proof}

Thanks to Lemma~\ref{th:trans-est}, and by using that $L(\cdot)$ is a slowly varying function, we obtain that for $N$ large enough we have $\widecheck \bP_h ( j \in \tau)\le 3C N^{-1}L(N)/ \tilde L(1/h)^2$ uniformly for $N/2\le j\le N-1$.
By considering the definition \eqref{eq:A+B} of $B$, we  obtain 
\begin{align}
B \, &\le 
\frac{3C L(N) }{N \tilde L(1/h)^2 }
 \sum_{n=N}^{M}  \sum_{j= N/2}^{N-1}   K(n-j)^\theta  \, .
\end{align}
Then we control
\begin{equation}
 \sum_{n=N}^{M}   \sum_{j= N/2}^{N-1}  K(n-j)^\theta \, \le\, 
  \sum_{t=1}^{N/2} t K(t)^{\theta} + \frac N2 \sum_{t=N/2}^{M}   K(t)^\theta  \le N \sum_{t=N/2}^{M}   K(t)^\theta \, ,
\end{equation}
where in the last inequality we used the fact that from slowly varying properties
\begin{equation}  
\sum_{t=1}^{N/2} tK(t)^{\theta}\le C N^{2-\theta} L(N)^{\theta} \, =\,  C e N L(N)^{\theta}\, ,
\end{equation}
which is negligible with respect to the second sum (we used the definition of $\theta$ to get that $N^{(1-\theta)} = e$).
We end up with
\begin{equation}  
B \le  
\frac{3C L(N) }{\tilde L(1/h) } \times \frac{1}{\tilde L(1/h) }
\sum_{n=N/2}^{M}    K(n)^\theta \, .
\end{equation}
Since we have already proven that the right-hand side in \eqref{eq:Astep1} vanishes
as $h$ becomes small, to prove that also $B$ vanishes in the same limit 
it suffices to show that 
$
{L(N)}/\tilde L(1/h)
$
is bounded for $h$ small. By recalling that $N=\exp(c_3/h)$, it is straightforward to see that in  the cases we consider, cf. Definition~\ref{def:L},  
such a ratio vanishes as $h$ goes to zero. Therefore also $B$ is under control and $\rho \le 1$ when $h$ is 
smaller than a well chosen constant. This completes the proof of Proposition~\ref{prop:better}.
\end{proof}

\section*{Acknowledgements}
This work has been performed in part when two of the authors, G.G. and H.L., were at the Institut Henri Poincar\'e (2017, spring-summer  trimester)
and we thank IHP for the hospitality. The visit to IHP by H.L. was supported by 
the Fondation de Sciences Math\'ematiques de Paris. G.G. acknowledges the support of grant  ANR-15-CE40-0020. H.L  acknowledges the support of a productivity grant from 
CNPq and a grant Jovem Cient\'sta do Nosso Estado from FAPERJ.

\end{document}